\documentclass[12pt]{amsart}
\usepackage{amscd,amsmath,amsthm,amssymb,graphics}
\usepackage[dvips]{graphicx}
\usepackage{multicol}
\usepackage{epic,eepic}
\usepackage{amsfonts,amssymb,amscd,amsmath,enumitem,verbatim}


%
%
\def\NZQ{\Bbb}               

\def\KK{{\NZQ K}}
%
%
\def\frk{\frak}               

\def\Phi{{\frk n}}
\def\Phi{{\frk N}}
%

%

\def\fb{{\bold f}}
\def\opn#1#2{\def#1{\operatorname{#2}}} 
%
\opn\chara{char} \opn\length{\ell} \opn\pd{pd} \opn\rk{rk}
\opn\projdim{proj\,dim} \opn\injdim{inj\,dim} \opn\rank{rank}
\opn\depth{depth} \opn\grade{grade} \opn\height{height}
\opn\embdim{emb\,dim} \opn\codim{codim}

\opn\Tr{Tr} \opn\bigrank{big\,rank}
\opn\superheight{superheight}\opn\lcm{lcm}
\opn\trdeg{tr\,deg}
\opn\reg{reg} \opn\lreg{lreg} \opn\ini{in} \opn\lpd{lpd}
\opn\size{size}\opn\bigsize{bigsize}
\opn\cosize{cosize}\opn\bigcosize{bigcosize}
\opn\sdepth{sdepth}\opn\sreg{sreg}
\opn\link{link}\opn\fdepth{fdepth}\opn\m{m}
%
\opn\div{div} \opn\Div{Div} \opn\cl{cl} \opn\Cl{Cl}
%

\let\epsilon\varepsilon
\let\phi=\varphi
\let\kappa=\varkappa
%
\opn\Spec{Spec} \opn\Supp{Supp} \opn\supp{supp} \opn\Sing{Sing}
\opn\Ass{Ass} \opn\Min{Min}\opn\Mon{Mon} \opn\dstab{dstab} \opn\astab{astab}
\opn\Syz{Syz}
%
%
\opn\Ann{Ann} \opn\Rad{Rad} \opn\Soc{Soc}
%
%
\opn\Im{Im} \opn\Ker{Ker} \opn\Coker{Coker} \opn\Am{Am}
\opn\Hom{Hom} \opn\Tor{Tor} \opn\Ext{Ext} \opn\End{End}
\opn\Aut{Aut} \opn\id{id}

\opn\nat{nat}
\opn\pff{pf}
\opn\Pf{Pf} \opn\GL{GL} \opn\SL{SL} \opn\mod{mod} \opn\ord{ord}
\opn\Gin{Gin} \opn\Hilb{Hilb}\opn\sort{sort}
\opn\initial{init}
\opn\ende{end}
\opn\height{height}
\opn\type{type}
\opn\set{set}
%
%
\opn\aff{aff} \opn\con{conv} \opn\relint{relint} \opn\st{st}
\opn\lk{lk} \opn\cn{cn} \opn\core{core} \opn\vol{vol}
\opn\link{link} \opn\star{star}\opn\lex{lex} \opn\tlex{t-lex} \opn\shad{Shad}

\opn\sqlex{sqlex}
\opn\gr{gr}

%
%

\def\pot#1#2{#1[\kern-0.28ex[#2]\kern-0.28ex]}

%
%
\opn\dirlim{\underrightarrow{\lim}}
\opn\inivlim{\underleftarrow{\lim}}
%
%
%

%
%

\def\Implies{\ifmmode\Longrightarrow \else
        \unskip${}\Longrightarrow{}$\ignorespaces\fi}
\def\implies{\ifmmode\Rightarrow \else
        \unskip${}\Rightarrow{}$\ignorespaces\fi}
\def\iff{\ifmmode\Longleftrightarrow \else
        \unskip${}\Longleftrightarrow{}$\ignorespaces\fi}

\let\:=\colon
 \theoremstyle{plain}
\newtheorem{Theorem}{Theorem}[section]
 \newtheorem{Lemma}[Theorem]{Lemma}
 
 \newtheorem{Proposition}[Theorem]{Proposition}

 \theoremstyle{definition}
 \newtheorem{Definition}[Theorem]{Definition}
 \newtheorem{Remark}[Theorem]{Remark}
 
 \newtheorem{Example}[Theorem]{Example}

%
%
\let\epsilon\varepsilon
\let\kappa=\varkappa
%
%
\textwidth=15cm \textheight=22cm \topmargin=0.5cm
\oddsidemargin=0.5cm \evensidemargin=0.5cm \pagestyle{plain}
%
%
%
\opn\dis{dis}
\def\pnt{{\raise0.5mm\hbox{\large\bf.}}}

\opn\Lex{Lex}



\begin{document}
\title{Kruskal-Katona Theorem for $T$-spread strongly stable ideals}
\author {Claudia Andrei-Ciobanu}

\address{Faculty of Mathematics and Computer Science, University of Bucharest,  Str. Academiei 14, Bucharest – 010014, Romania, \emph{E-mail address: claudiaandrei1992@gmail.com}}

\thanks{This paper was written while the author visited the Department of Mathematics of the University of Duisburg-Essen. The author gratefully acknowledges the financial support awarded by the European Mathematical Society and the Doctoral School in Mathematics of the University of Bucharest.}

\begin{abstract}
We prove that any $t$-spread strongly stable ideal has a unique $t$-spread lex ideal with the same $f$-vector. We also characterize the possible $f$-vectors of $t$-spread strongly stable ideals in the "$t$-spread" analogue of Kruskal-Katona theorem.
\end{abstract}

\thanks{The author would also like to thank Professor J\"urgen Herzog for valuable suggestions and comments during the preparation of this paper.}
\subjclass[2010]{05E40, 13A02, 13D40, 13F55}
\keywords{t-spread strongly stable ideals, Stanley-Reisner ideals, Kruskal-Katona Theorem, Hilbert series}

\maketitle
\section*{Introduction}

Kruskal-Katona Theorem solves the problem of characterizing the possible $f$-vectors of the simplicial complexes on a given vertex set. With a special importance in Combinatorial Algebra, this result has been proved by Joseph Kruskal \cite{Krusk} and Gyula Katona \cite{Kat}. In other words, Kruskal-Katona Theorem gives an elegant answer for the following question: \emph{Given the number of faces of dimension $d-1$ of a simplicial complex $\Delta$, how many faces of dimension $d$ could the complex have?}.

Let $\Delta$ be a $(d-1)$-dimensional simplicial complex on the vertex set $[n]$. The $f$-vector of $\Delta$ is defined as a sequence of positive integers $f=(f_{-1}, f_0, f_1, \ldots, f_{d-1})$ with the property that $f_{-1}=1$ and $f_i$ counts the faces of $\Delta$ of dimension equal to $i$ for $i\in\{0,1,\ldots, d-1\}$. Then Kruskal-Katona Theorem claims that a sequence $f=(f_{-1}, f_0, f_1, \ldots, f_{d-1})\in\mathbb{Z}^{d+1}$ is the $f$-vector of some $(d-1)$-dimensional simplicial complex if and only if $f_{-1}=1$ and $0<f_{i+1}\leq f_{i}^{(i+1)}$ for all $i$, where $f_i^{(i+1)}$ is determined by the so-called binomial or Macaulay expansion. More precisely, given two integers $a, d>0$, let
\[a={a_d \choose d}+{{a_{d-1}} \choose {d-1}}+\cdots + {a_r \choose r}\]
with $a_d>a_{d-1}>\ldots>a_r\geq r\geq 1$ be the unique binomial expansion of $a$ with respect to $d$. Then
\[a^{(d)}={a_d \choose {d+1}}+{{a_{d-1}} \choose {d}}+\cdots + {a_r \choose {r+1}}.\]

An algebraic proof of Kruskal-Katona Theorem can be found in \cite{HHBook}. Let $\Delta$ be a $(d-1)$-dimensional simplicial complex on the vertex set $[n]$, $\KK$ be a field and $\KK\{\Delta\}$ be the exterior face ring of $\Delta$. Then $\KK\{\Delta\}=E/J_{\Delta}$, where $E$ is the exterior algebra of the $\KK$-vector space $V=\oplus_{i=1}^{n}\KK e_i$ and $J_{\Delta}\subset E$ is the graded ideal generated by all exterior monomials $e_{F}=e_{i_1}\wedge \ldots\wedge e_{i_k}$ for which $F=\{i_1,\ldots, i_k\}\notin \Delta$. Since the Hilbert series of $\KK\{\Delta\}$ is $H_{\KK\{\Delta\}}(t)=\sum_{i=0}^{d}f_{i-1}t^i$, where $(f_{-1}, f_0, f_1, \ldots, f_{d-1})$ is the $f$-vector of $\Delta$, in order to get the conditions on the $f_i$'s, we need to obtain the possible Hilbert functions of graded algebras of the form $E/J$. The main step of the proof consists of showing that for each graded ideal $J\subset E$, there exists a unique square-free lexsegment ideal $J^{\sqlex}\subset E$ such that $H_{E/J}(t)=H_{E/J^{\sqlex}}(t)$. Therefore, it remains to understand the Hilbert series of a square-free lexsegment ideal.

In this paper, the key role is played by $t$-spread strongly stable ideals with $t\geq1$. They have been recently introduced in \cite{EHQ} and they represent a special class of square-free monomial ideals. Let $t\geq 1$ be a positive integer, $\KK$ be a field and $S=\KK[x_1, \ldots, x_n]$ be the polynomial ring in $n$ variables over $\KK$.

A monomial $x_{i_1}x_{i_2}\cdots x_{i_d}\in S$ with $i_1\leq i_2\leq\cdots \leq i_d$  is called $t$-spread, if $i_j- i_{j-1}\geq t$ for $2\leq j \leq n$ and a  monomial ideal in $S$ is called  a $t$-spread monomial ideal,  if it is generated by  $t$-spread monomials. Note that every square-free monomial is a $1$-spread monomial and thus, every square-free monomial ideal is a $1$-spread ideal.

Let $I$ be a $t$-spread ideal in $S$ and let $\Delta$ be its associated simplicial complex. It is natural to introduce the $f_t$-vector of $I$, $${\fb}_{t}(I)=(f_{t,-1}(I),f_{t,0}(I)\ldots,f_{t,j}(I), \ldots),$$ where $f_{t,j}(I)$ is the cardinality of the set
\[\{F\;:\;F\text{ is a }j-\text{dimensional face in }\Delta\text{ and }x_F=\prod_{i\in F}x_i\text{ is a }t-\text{spread monomial}\}.\]

Since ${\fb}_1(I)$ is the classical $f$-vector of $\Delta$, a natural question is the following: \emph{Are there any results for $f_t(I)$ which generalize the classification of the classical $f$-vectors for $t$-spread strongly stable ideals with $t\geq 1$?}. A complete answer for $t$-spread strongly stable ideals is given in the main result of this paper,
\\ {\bf Theorem~\ref{Kruskalkatona}.}
Let $f=(f(0),f(1),\ldots, f(d),\ldots)$ be a sequence of positive integers and $t\geq 1$ be an integer. The following conditions are equivalent:
  \begin{enumerate}
    \item[\emph{(1)}] there exists an integer $n\geq 0$ and a $t$-spread strongly stable ideal $$I\subset \KK[x_1,\ldots,x_n]$$ such that $f(d)=f_{t,d-1}(I)$ for all $d$.
    \item[\emph{(2)}] $f(0)=1$ and $f(d+1)\leq f(d)^{[d]_t}$ for all $d\geq 1$.
  \end{enumerate}
The $t$-operator $f(d)\rightarrow f(d)^{[d]_t}$ is determined analogously to the operator which is involved in the proof of Kruskal-Katona Theorem; see Definition~\ref{power} and Remark~\ref{clasica}.

The proof of this theorem follows the steps in the proof of Kruskal-Katona Theorem given in \cite{HHBook}.
Thus, in Section 1, we define the $t$-spread lex ideal and we recall from \cite{EHQ} what a $t$-spread strongly stable ideal means.
In Section 2, Theorem~\ref{exists} shows that for any $t$-spread strongly stable ideal there exists a unique $t$-spread lex ideal with the same $f_t$-vector.
By Remark~\ref{tgeq2}, a $t$-spread ideal may not have an associated $t$-spread lex ideal with the same $f_t$-vector. That is why we will restrict to the case when the ideal is $t$-spread strongly stable. Classifying all $t$-spread ideals which have an associated $t$-spread lex ideal with the same $f_t$-vector remains still open. In Section 3, we present a complete classification of the sequences of positive integers which are the $f_t$-vectors of some $t$-spread strongly stable ideals; see Theorem~\ref{Kruskalkatona}.

\section{Preliminaries}

Fix a field $\KK$ and a polynomial ring $S=\KK[x_1,\ldots, x_n]$.
A monomial $x_{i_1}x_{i_2}\cdots x_{i_d}\in S$ with $i_1\leq i_2\leq\cdots \leq i_d$  is called $t$-spread, if $i_j- i_{j-1}\geq t$ for $2\leq j \leq n$. Note that any monomial is $0$-spread, while the square-free monomials are $1$-spread.

A  monomial ideal in $S$ is called  a $t$-spread monomial ideal,  if it is generated by  $t$-spread monomials. For example,  $I=(x_1x_4x_8,x_2x_5x_8,x_1x_5x_9,x_2x_6x_9,x_4x_9) \subset K[x_1, \ldots, x_9]$ is a $3$-spread monomial ideal, but not $4$-spread, because $x_2x_5x_8$ is not a $4$-spread monomial.

For an arbitrary monomial ideal $I$, we denote by $I_j$, the $j$-th graded component of $I$ and call the set of $t$-spread monomials in $I_j$, the $t$-spread part of $I_j$ and denote it by ${[I_j]}_t$. Furthermore, we set
\[f_{t,j-1}(I)=|{[S_j]}_t |-|{[I_j]}_t|.\]

Then the vector ${\fb}_{t}(I)=(f_{t,-1}(I),f_{t,0}(I)\ldots,f_{t,j}(I), \ldots)$ is called the \emph{$f_t$-vector} of the $t$-spread monomial ideal $I$. By convention, we set $f_{t,-1}=1$.
Note that if $t=1$ then $I$ is the Stanley-Reisner ideal  of a uniquely determined simplicial complex $\Delta$ and ${\fb}_1(I)$ is the classical $f$-vector of $\Delta$. According to \cite[Theorem 5.1.7]{BH98}, the $f_1$-vector of 1-spread monomial ideal $I\subset S$ determines the Hilbert function of $S/I$. This is not the case for $t\geq 2$. For example, $I_1=(x_1x_3,x_2x_4)$ and $I_2=(x_1x_3,x_1x_4)$ are $2$-spread monomial ideals in $\KK[x_1,\ldots, x_4]$ with ${\fb}_2(I_1)={\fb}_2(I_2)=(1,4,1,0,0,\ldots)$ and $H(S/I_1,3)=12<13=H(S/I_2,3)$.

We denote by $M_{n,d,t}$ the set of the $t$-spread monomials of degree $d$ in the polynomial ring $S$. For a monomial $u\in S$, we set \[\supp(u)=\{i:x_i\mid u\}\text{ and }\m(u)=\max\{i: i\in \supp(u)\}.\]

\begin{Definition}

\begin{itemize}
  \item[(a)] A subset $L\subset M_{n,d,t}$ is called a {\em $t$-spread strongly stable set}, if for all $t$-spread monomials $u\in L$, all $j\in\supp(u)$ and all $1\leq i<j$ such that $x_i(u/x_{j})$ is a $t$-spread monomial, it follows that $x_i(u/x_j)\in L$.
  \item[(b)] Let $I$ be a $t$-spread monomial ideal. Then $I$ is called a {\em $t$-spread strongly stable ideal}, if ${[I_j]}_t$ is a $t$-spread strongly stable set for all $j$.
\end{itemize}
\end{Definition}

A special class of $t$-spread strongly stable ideals consists of $t$-spread lex ideals, which are defined as follows.

\begin{Definition}

\begin{itemize}
  \item[(a)] A subset $L\subset M_{n,d,t}$ is called a {\em $t$-spread lex set}, if for all $u \in L$ and for all $v \in M_{n,d,t}$ with $v >_{\lex} u$, it follows that $v \in L$.
  \item[(b)] Let $I$ be a $t$-spread monomial ideal. Then $I$ is called a {\em $t$-spread lex ideal}, if ${[I_j]}_t$ is a $t$-spread lex set for all $j$.
\end{itemize}
\end{Definition}

Let $L\subset M_{n,d,t}$ be a $t$-spread lex set.  Note that $L$ need not to be a $t$-spread lex set in $M_{m,d,t}$ for $m>n$. For example,  $L=\{x_1x_2, x_1x_3, x_2x_3\}$ is  a 1-lex set in $M_{3,2,1}$, but not in $M_{4,2,1}$.  However, if $L$ is a $t$-spread strongly stable set in $M_{n,d,t}$, then $L$ remains a $t$-spread strongly stable set in $M_{m,d,t}$ for all $m>n$.

We notice that $\{I\subset S:I\text{ is a }t-\text{spread lex ideal}\}\subset\{I\subset S:I\text{ is a }t-\text{spread strongly stable ideal}\}$ and the inclusion is strict according to the following example.

\begin{Example}
Let $I=(x_1x_3, x_1x_4, x_2x_4x_6, x_2x_4x_7)\subset \KK[x_1,\ldots, x_7]$. Then $I$ is a $2$-spread strongly stable ideal. Since $x_1x_5x_7>_{\lex}x_2x_4x_6$ and $x_1x_5x_7\notin I$, the ideal $I$ is not a $2$-spread lex ideal.
\end{Example}

For every $L \subset M_{n,d,t}$ and for every $0\leq\tau\leq t$, we define the $\tau$-shadow of $L$
\[\shad_{\tau}(L)=\{x_iv\; : \; v \in L , 1\leq i\leq n \text{ and } x_iv \text{ is a $\tau$-spread monomial} \}\]
\begin{Lemma}\label{shad}
\begin{itemize}
\item[(a)] Let $L \subset M_{n,d,t}$ be a $t$-spread strongly stable set.

Then $\shad_t(L) \subset M_{n,d+1,t}$ is also a $t$-spread strongly stable set.
\item[(b)] Let $L \subset M_{n,d,t}$ be a $t$-spread lex set.

Then $\shad_t(L)\subset M_{n,d+1,t}$ is also a $t$-spread lex set.
\end{itemize}
\end{Lemma}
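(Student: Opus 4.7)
For part (a), I would take $w \in \shad_t(L)$, writing $w = x_i v$ with $v \in L$ and $w$ a $t$-spread monomial (so in particular $i \notin \supp(v)$). Given $j \in \supp(w)$ and $k < j$ with $x_k(w/x_j)$ a $t$-spread monomial, I would split on whether $j = i$ or $j \in \supp(v)$. In the first case $x_k(w/x_j) = x_k v$, which lies in $\shad_t(L)$ immediately because $v \in L$ and $x_k v$ is $t$-spread by hypothesis. In the second case, $j \in \supp(v)$ and $j \neq i$. Set $u = x_k(v/x_j)$; this is a submonomial of $x_k x_i(v/x_j) = x_k(w/x_j)$, which is $t$-spread, and any submonomial of a $t$-spread monomial is $t$-spread (deleting an index only widens the surrounding gap). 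Applying the $t$-spread strong stability of $L$ to $v \in L$ with $j \in \supp(v)$ and $k < j$ gives $u \in L$. Then $x_k(w/x_j) = x_i u$ is a $t$-spread monomial of the form $x_i u$ with $u \in L$, hence lies in $\shad_t(L)$.

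For part (b), let $w \in \shad_t(L)$, write $w = x_p v$ with $v \in L$ and $p \in \supp(w)$, and let $w' \in M_{n,d+1,t}$ with $w' >_{\lex} w$. Set $v' = w'/x_{m(w')}$, which is $t$-spread of degree $d$; my goal is to prove $v' \in L$ and then conclude $w' = x_{m(w')} \cdot v' \in \shad_t(L)$. Since $L$ is $t$-spread lex, it suffices to establish $v' \geq_{\lex} v$, and I would obtain this through the chain
\[
v \leq_{\lex} w/x_{m(w)} \leq_{\lex} w'/x_{m(w')} = v'.
\]
The first inequality holds because, among all monomials obtained from $w$ by deleting a variable, the one removing the largest index is lex-largest. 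The second inequality follows by writing $w = x_{k_1}\cdots x_{k_{d+1}}$ and $w' = x_{j_1}\cdots x_{j_{d+1}}$ in increasing order: the assumption $w' >_{\lex} w$ yields an index $s$ with $j_r = k_r$ for $r<s$ and $j_s < k_s$; if $s \leq d$ this same witness makes $v' >_{\lex} w/x_{m(w)}$, while if $s = d+1$ the two truncations coincide.

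The main obstacle is essentially the choice of witness in each part. In (a) the subtlety is recognising that when the swap $j \mapsto k$ takes place inside $\supp(v)$ one should not return $v$ to $L$ by itself, but rather the modified monomial $x_k(v/x_j)$, and only afterwards restore $x_i$; this requires checking that the modified monomial is $t$-spread, which is exactly where the submonomial remark is used. In (b) the art is picking the divisor $x_{m(w')}$ of $w'$: factoring out the \emph{largest} variable is what makes the quotient $v'$ as lex-large as possible, so that the comparison with $v$ (which can only be lex-smaller than $w/x_{m(w)}$) actually closes. Once these choices are made, everything reduces to the definitions.
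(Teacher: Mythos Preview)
Your proof is correct and follows essentially the same approach as the paper's. The only cosmetic difference is that the paper first normalizes the decomposition $w = x_i v$ so that $i = \m(w)$ (using strong stability of $L$), which lets it skip your ``submonomial is $t$-spread'' observation in part (a) and collapse your two-step chain $v \leq_{\lex} w/x_{\m(w)} \leq_{\lex} w'/x_{\m(w')}$ to a single comparison in part (b); your version simply does without this preliminary reduction.
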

\begin{proof}
Let $u\in\shad_t(L)$. Then $u =wx_j$ for some $w\in L$. We may assume that $\m(w)\leq j$. Otherwise, we consider $u=w'x_{\m(u)}$, where $w'=x_j(w/x_{\m(u)})\in L$.
\begin{itemize}
  \item[(a)] Let $v=x_i(u/x_k)=x_i(wx_j)/x_k$ be a $t$-spread monomial such that $k \in \supp(u)$ and $i<k$. Then we need to show that $v\in \shad_t(L)$. If $k=j$, then $v=wx_i\in\shad_t(L)$, by definition of $\shad_t(L)$. If $k\neq j$, then $x_iw/x_j\in L$ and again $v\in \shad_t(L)$.
  \item[(b)] Let $v\in M_{n,d+1,t}$ with $v >_{\lex} u$. Then we need to show that $v \in \shad_t(L)$. Let $v=x_{i_1}\cdots x_{i_{d+1}}$ with $i_1 \leq \ldots \leq i_{d+1}$ and $u=x_{k_1}\cdots x_{k_{d}}x_j$ with $k_1 \leq \ldots \leq k_{d} \leq j$. Since $v >_{\lex} u$, $v'=x_{i_1}\cdots x_{i_{d}} \geq_{\lex} w=x_{k_1}\cdots x_{k_{d}}$. This shows that $v' \in L$ and therefore $v\in\shad_t(L)$ because $v=v'x_{i_{d+1}}$.
\end{itemize}
\end{proof}

\begin{Remark}
The assertions of the previous lemma do not remain true for every $0\leq \tau<t$. Indeed, let $L=\{x_1x_3,x_1x_4,x_1x_5,x_2x_4\}\in M_{5,2,2}$ be a $2$-spread lex set. Then $\shad_1(L)=\{x_1x_2x_3, x_1x_2x_4, x_1x_2x_5, x_1x_3x_4, x_1x_3x_5, x_1x_4x_5, x_2x_3x_4, x_2x_4x_5\}$ is not a $1$-spread strongly stable set because $x_2x_3x_5\notin \shad_1(L)$.
\end{Remark}

\section{The existence of $I^{\tlex}$}

Let $I \subset S$ be a $t$-spread strongly stable monomial ideal. Then a $t$-spread lex ideal $J \subset S$ with $\fb_t(I)= \fb_t(J)$, if exists, is uniquely determined.  We then denote this ideal $J$ by $I^{\tlex}$. The main purpose of this section is to prove

\begin{Theorem}\label{exists}
For any $t$-spread strongly stable ideal $I$, the $t$-spread lex ideal $I^{\tlex}$ exists.
\end{Theorem}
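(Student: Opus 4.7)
The plan is to construct $I^{\tlex}$ degree by degree and then reduce the whole problem to a single combinatorial inequality about $t$-shadows. For every $d \geq 0$, let $L_d \subset M_{n,d,t}$ denote the $t$-spread lex set of cardinality $|[I_d]_t|$; this is well defined and unique because the lex order on $M_{n,d,t}$ is total and $|[I_d]_t| \leq |M_{n,d,t}|$. Let $J$ be the monomial ideal generated by $\bigcup_{d} L_d$, and note that by construction $[J_d]_t \supset L_d$. Provided that $J$ is genuinely an ideal and that $[J_d]_t = L_d$ for every $d$, $J$ is a $t$-spread lex ideal with $\fb_t(J)=\fb_t(I)$, hence $J = I^{\tlex}$.

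The nontrivial point is the passage from degree $d$ to degree $d+1$, i.e.\ the condition $\shad_t(L_d) \subset L_{d+1}$: this is exactly what guarantees both that $J$ is closed under multiplication by $t$-spread-admissible variables and that no additional elements creep into $[J_{d+1}]_t$ beyond $L_{d+1}$. By Lemma~\ref{shad}(b), $\shad_t(L_d)$ is itself a $t$-spread lex set in $M_{n,d+1,t}$, and $L_{d+1}$ is the unique $t$-spread lex set in $M_{n,d+1,t}$ of its cardinality, so the inclusion $\shad_t(L_d)\subset L_{d+1}$ is equivalent to the numerical inequality
\[
|\shad_t(L_d)| \;\leq\; |L_{d+1}| \;=\; |[I_{d+1}]_t|.
\]
Since $I$ is an ideal, $\shad_t([I_d]_t) \subset [I_{d+1}]_t$, and therefore $|\shad_t([I_d]_t)|\leq |[I_{d+1}]_t|$. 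So the whole theorem reduces to the key inequality
\[
|\shad_t(L_d)| \;\leq\; |\shad_t([I_d]_t)|,
\]
that is, among $t$-spread strongly stable subsets of $M_{n,d,t}$ of fixed cardinality, the $t$-spread lex set achieves the minimum $t$-shadow.

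The hard part is exactly this shadow-minimality inequality; I would adapt the argument in \cite{HHBook} for the classical case $t=1$. The natural route is induction on $n$ (or equivalently on the largest index appearing in $[I_d]_t$), splitting any $t$-spread strongly stable set $N \subset M_{n,d,t}$ as $N = x_1 \cdot N' \sqcup N''$, where $N' = \{u/x_1 : u \in N,\ 1 \in \supp(u)\}$ lives, after the relabeling $x_{1+t} \mapsto y_1,\ldots,x_n \mapsto y_{n-t}$, in $M_{n-t,d-1,t}$, and $N'' = \{u \in N : 1 \notin \supp(u)\}$ lives, after $x_2 \mapsto y_1,\ldots,x_n \mapsto y_{n-1}$, in $M_{n-1,d,t}$. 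Both pieces inherit the $t$-spread strongly stable property, and strong stability forces a containment that feeds the induction. The $t$-shadow splits compatibly along the same decomposition, so a double induction (on $n$ and on $d$) combined with the fact that exchanging a $t$-spread strongly stable set for its lex counterpart in each piece does not enlarge either summand of the shadow yields the bound.

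Once the shadow inequality is in hand, $\shad_t(L_d)\subset L_{d+1}$ for every $d$, so the monomial ideal $J$ with $[J_d]_t=L_d$ is a genuine ideal, is a $t$-spread lex ideal by construction, and satisfies $\fb_t(J)=\fb_t(I)$. Uniqueness of such a $J$ is precisely the remark made right before the theorem statement, so $J = I^{\tlex}$ as required.
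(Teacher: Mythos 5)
Your reduction is exactly the paper's: define $L_d$ as the $t$-spread lex set with $|L_d|=|[I_d]_t|$, observe that everything hinges on the inclusions $\shad_t(L_d)\subset L_{d+1}$, note that since both sides are lex sets this inclusion is equivalent to a purely numerical inequality, and use $\shad_t([I_d]_t)\subset[I_{d+1}]_t$ to reduce to the shadow-minimality statement: among $t$-spread strongly stable subsets of $M_{n,d,t}$ of a given cardinality, the lex set has the smallest $t$-shadow. Up to this point the argument is correct and complete, and it matches the paper.

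The gap is in the proof of that shadow-minimality statement, which is the only hard part of the theorem and which you leave as a sketch. The specific problem: after splitting $N=x_1N'\sqcup N''$ and $L=x_1L'\sqcup L''$, the shadow does decompose additively, $|\shad_t(N)|=|\shad_t(N')|+|\shad_t(N'')|$ (using strong stability to see that $\{u\in N'':\min(u)\geq 1+t\}\subset\shad_t(N')$), but the induction hypothesis cannot be applied to the two pieces separately, because $|L|\leq|N|$ gives no control on $|L'|$ versus $|N'|$ or on $|L''|$ versus $|N''|$ individually --- the cardinality redistributes between the two pieces when you pass from $N$ to its lex counterpart, and for a lex set the split is extreme (either $L''=\emptyset$ or $L'$ is all of $M_{n-t,d-1,t}$). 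Closing this requires an extra quantitative lemma (in the classical case, a bound of the type $|N'|\geq |L'|$ extracted from the containment $\{u/x_{\min(u)}:u\in N''\}\subset N'$), and that is precisely where the real work lies; "exchanging each piece for its lex counterpart" does not by itself yield the bound. The paper takes a different route that does close: it peels off the \emph{last} variable ($N=N_0\cup N_1x_n$ rather than the first), proves the stronger cumulative statement $m_{\leq i}(L)\leq m_{\leq i}(N)$ for all $i$ (Theorem~\ref{Bayer}) --- a stronger induction hypothesis that survives the decomposition --- and anchors the induction by comparing $\min L_0$ with $\min N_0$ via an explicitly constructed lex-order-preserving map $\alpha$; the shadow inequality then follows from the identity $|\shad_t(N)|=\sum_i m_{\leq i}(N)$ of Lemma~\ref{shadow}. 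So your architecture is right, but the theorem is not proved until the shadow-minimality inequality is actually established, and the double induction as described does not establish it.
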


For the proof, we proceed in a similar way to the proof of the existence of $I^{\lex}$ and $I^{\sqlex}$; see \cite[Chapter 6]{HHBook}.

For each graded component $I_j$ of $I$, let $I_j^{\tlex}$ be the $\KK$-vector space spanned by $L_j\cup \shad_0(B_{j-1})$, where $L_j$ is the unique $t$-spread lex set with $|L_j|=|{[I_j]}_t|$ and where $B_{j-1}$ is the set of the monomials of $I_{j-1}^{\tlex}$. If $j=0$, then we consider $B_{j-1}=\emptyset$. We define $I^{\tlex}=\bigoplus_j I_j^{\tlex}$.
This is the only possible candidate meeting the requirements of the theorem if and only if $\shad_t(L_j)\subset L_{j+1}$ for all $j$. Indeed, $I^{\tlex}$ is an ideal in $S$ because $B_j$ is a $\KK$-basis for $I_j^{\tlex}$ and $\shad_0(B_{j-1})\subset B_j=L_{j}\cup \shad_0(B_{j-1})$ for every $j\geq 0$.

Let $d$ be the smallest degree which appears in the set of the generators of $I$. We notice that $B_j=\emptyset=L_j$ for all $j<d$ and $B_d=L_d$. Then \[\shad_t(B_j)\subset L_{j+1}\text{ and }|[I_j^{\tlex}]_t|=|L_j|=|[I_j]_t|\] if and only if \[\shad_t(L_j)\subset L_{j+1}\] for all $j\leq d$. By using induction on $j$,
\begin{align*}
  \shad_t(B_j)= & \shad_t(L_j\cup \shad_0(B_{j-1})) \\
   = & \shad_t(L_j)\cup\shad_t(\shad_0(B_{j-1}))\subset L_{j+1}\cup \shad_t(\shad_t(B_{j-1}))=L_{j+1}
\end{align*}
and \[|[I_{j+1}^{\tlex}]_t|=|L_{j+1}|=|[I_{j+1}]_t|\]
if and only if \[\shad_t(L_j)\subset L_{j+1}.\]

The proof of Theorem~\ref{exists} is completed if we show that $|\shad_t(L_j)|\leq |[I_{j+1}]_t|=|L_{j+1}|$, since $L_j$ is a $t$-spread lex set for every $j\geq 0$.

\medskip

Let $L\subset M_{n,d,t}$ be a set of monomials. For every $i\in\{1+t(d-1),2+t(d-1)\ldots, n\}$, we denote by $m_i(L)$ the number of elements $u\in L$ with $m(u)=i$ and we set $m_{\leq i}(L)=\sum_{j=1}^{i}m_j(L)$. Then we have the following result.

\begin{Lemma}\label{shadow}
Let $L\subset M_{n,d,t}$ be a $t$-spread strongly stable set. Then

\begin{itemize}
\item[\emph{(a)}] $m_i(\shad_t(L))=m_{\leq{i-t}}(L)$ for all $i$ and
\item[\emph{(b)}] $|\shad_t(L)|=\sum_{i=1+(d-1)t}^{n-t}m_{\leq i}(L)$.
\end{itemize}
\end{Lemma}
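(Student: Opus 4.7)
The plan is to reduce part (b) to part (a) by a straightforward summation, so the main work goes into (a). For part (a), I would prove the equality $m_i(\shad_t(L)) = m_{\leq i-t}(L)$ by exhibiting an explicit bijection between the set $A_i = \{u \in \shad_t(L) : \m(u) = i\}$ and $B_i = \{v \in L : \m(v) \leq i-t\}$, given by $v \mapsto vx_i$ in one direction and $u \mapsto u/x_i$ in the other. The direction $B_i \to A_i$ is immediate: if $v \in L$ has $\m(v) \leq i-t$, then $vx_i$ is a $t$-spread monomial of degree $d+1$ with maximum index $i$, and by definition $vx_i \in \shad_t(L)$.

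The nontrivial direction is $A_i \to B_i$, for which the crucial claim is that if $u \in \shad_t(L)$ with $\m(u) = i$, then $u/x_i \in L$. Once this claim is established, it is clear that $u/x_i$ is $t$-spread (since removing the largest variable from a $t$-spread monomial leaves a $t$-spread monomial) and has $\m(u/x_i) \leq i-t$ by the $t$-spread condition on $u$.

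To prove the claim, I would write $u = x_k w$ with $w \in L$ (by definition of $\shad_t(L)$). If $k = i$, then $u/x_i = w \in L$ and we are done. Otherwise $k < i$, which forces $\m(w) = i$, so we can write $w = w' x_i$ with $w' = w/x_i$. Then $u/x_i = x_k w'$, and since $u$ is $t$-spread, so is $x_k w'$. Applying the $t$-spread strongly stable property of $L$ to $w \in L$, the index $j = i \in \supp(w)$, and $k < i$ with $x_k(w/x_i)$ being $t$-spread, yields $x_k(w/x_i) = u/x_i \in L$, as required. I expect this step — verifying that the strongly stable hypothesis applies precisely in the case $k < i$ — to be the main technical point; everything else is bookkeeping.

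Finally, part (b) follows by summing (a) over the admissible values of $i$. A $t$-spread monomial of degree $d+1$ has maximum index at least $1 + dt$, and at most $n$, so
\[
|\shad_t(L)| = \sum_{i=1+dt}^{n} m_i(\shad_t(L)) = \sum_{i=1+dt}^{n} m_{\leq i-t}(L) = \sum_{j=1+(d-1)t}^{n-t} m_{\leq j}(L),
\]
after the change of variable $j = i-t$.
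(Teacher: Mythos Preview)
Your proof is correct and follows essentially the same approach as the paper: both establish part~(a) via the bijection $v \mapsto vx_i$ between $\{v \in L : \m(v) \leq i-t\}$ and $\{u \in \shad_t(L) : \m(u)=i\}$, using the $t$-spread strongly stable hypothesis exactly in the case where the extra variable is not the maximal one, and both deduce (b) from (a) by summing over the admissible values of~$i$.
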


\begin{proof}
  $(b)$ is a consequence of $(a)$. For the proof of $(a)$, we note that the map
  \[\phi:\{u\in L: m(u)\leq i-t\}\rightarrow \{u\in \shad_t(L): m(u)=i\}, u\rightarrow ux_i\]
  is a bijection. In fact, $\phi$ is clearly injective. To see that $\phi$ is surjective, let $v\in \shad_t(L)$ with $m(v)=i$. Since $v\in \shad_t(L)$, there exists $u\in L$ with $v=ux_j$ for some $j\leq i$. If $j=i$, then $m(u)<m(v)$ and $i-m(u)=m(v)-m(u)\geq t$ because $v$ is a $t$-spread monomial. In other words, $u\in \{u\in L: m(u)\leq i-t\}$ and $\phi(u)=v$. Otherwise, $j<i$ and $i\in \supp(u)$. Since $L$ is a strongly stable set and $v$ is a $t$-spread monomial, $w=x_j(u/x_i)\in L$. Thus, $v=wx_i$ and $\phi(w)=v$.
\end{proof}

Now Theorem~\ref{exists} will be a consequence of the following theorem.
\begin{Theorem}\label{Bayer}
Let $L\subset M_{n,d,t}$ be a $t$-spread lex set and let $N\subset M_{n,d,t}$ be a $t$-spread strongly stable set with $|L|\leq |N|$. Then $m_{\leq i}(L)\leq m_{\leq i}(N)$.
\end{Theorem}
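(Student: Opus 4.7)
The plan is to prove the theorem by double induction---outer on $d$, inner on $n$---mirroring the argument for the square-free Kruskal--Katona theorem in \cite[Chapter~6]{HHBook}.

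The outer base $d=1$ is trivial, since any $t$-spread lex or strongly stable subset of $M_{n,1,t}=\{x_1,\ldots,x_n\}$ is of the form $\{x_1,\ldots,x_k\}$. For $d\geq 2$, assume the theorem in dimension $d-1$ and reduce to $|L|=|N|$: the $t$-spread lex sets in $M_{n,d,t}$ are totally ordered by inclusion, so the lex set $L^{\dagger}$ of size $|N|$ contains $L$ and satisfies $m_{\leq i}(L)\leq m_{\leq i}(L^{\dagger})$. Then induct on $n$; the inner base $n=1+(d-1)t$ is immediate since $|M_{n,d,t}|=1$.

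For the inner inductive step, decompose $L=L^{(0)}\sqcup L^{(1)}$ and $N=N^{(0)}\sqcup N^{(1)}$ by divisibility by $x_n$. Routine verification shows $L^{(0)}$ is $t$-spread lex and $N^{(0)}$ is $t$-spread strongly stable in $M_{n-1,d,t}$, while $L^{(1)}/x_n$ and $N^{(1)}/x_n$ are respectively lex and strongly stable in $M_{n-t,d-1,t}$. For $i\geq n$ the desired inequality is $|L|=|N|$; for $i<n$ it becomes $m_{\leq i}(L^{(0)})\leq m_{\leq i}(N^{(0)})$, which by the inner inductive hypothesis follows once one knows $|L^{(0)}|\leq|N^{(0)}|$, equivalently $|L^{(1)}|\geq|N^{(1)}|$.

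The main obstacle is proving this last inequality. Write $T_L=L^{(1)}/x_n$ and $T_N=N^{(1)}/x_n$, and set $C(T):=\sum_{v\in T}(n-\m(v)-t+1)$. Strong stability of $N$ forces, for each $v\in T_N$, the whole orbit $\{vx_j:\m(v)+t\leq j\leq n\}$ into $N$; these orbits are pairwise disjoint across distinct $v$, so $C(T_N)\leq|N|$. A direct inspection of the lex decomposition of $L$ shows, on the other hand, that $T_L$ is the largest lex-initial subset of $M_{n-t,d-1,t}$ with $C(T_L)\leq|L|$ (including the next lex element would force the cost to exceed $|L|$). Hence the needed inequality $|T_L|\geq|T_N|$ follows once one knows that lex-initial subsets of $M_{n-t,d-1,t}$ minimize $C$ among $t$-spread strongly stable sets of a given cardinality. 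By the Abel-summation identity
\[
\sum_{v\in T}\m(v)\;=\;(n-t)|T|-\sum_{i=1+(d-2)t}^{n-t-1}m_{\leq i}(T),
\]
this cost-minimization is equivalent to lex-initial sets maximizing $\sum_v\m(v)$, which is itself an immediate consequence of the outer inductive hypothesis (Theorem~\ref{Bayer} in dimension $d-1$). The principal technical difficulty is verifying the extremal characterization of $T_L$, and in particular handling carefully the partial boundary orbit in the lex decomposition of $L$.
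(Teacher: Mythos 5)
Your proof is correct, and although it shares the paper's skeleton --- split $L$ and $N$ according to divisibility by $x_n$, reduce everything to the single inequality $|L^{(0)}|\leq|N^{(0)}|$, and close by induction in $n-1$ variables --- it establishes that key inequality by a genuinely different mechanism. The paper first compresses $N$ into $N^*=N_0^*\cup N_1^*x_n$, replacing both pieces by lex sets of the same cardinalities (using Lemma~\ref{shadow}, Lemma~\ref{shadow}'s shadow count, and the induction hypothesis to verify that $N^*$ is still strongly stable), and then compares $\min L_0$ with $\min N_0$ via an explicit lex-order-preserving map $\alpha:M_{n,d,t}\to M_{n,d,t}$, which costs a multi-case analysis. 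You instead bound $|N^{(1)}|$ directly: strong stability forces the pairwise disjoint orbits $\{vx_j:\m(v)+t\leq j\leq n\}$, $v\in T_N$, into $N$ (disjoint because the prefix $v=w/x_{\m(w)}$ is recoverable from $w$), giving $C(T_N)\leq|N|$; lex-initial sets minimize $C$ at fixed cardinality by the outer induction at degree $d-1$ via your Abel-summation identity; and $T_L$ is exactly the largest lex-initial subset of cost at most $|L|$, since the partial boundary orbit of a lex set never reaches $v_0x_n$. All of these steps check out. It is worth noting that your cost equals a shadow cardinality, $C(T)=|T|+\sum_i m_{\leq i}(T)=|\shad_t(T)|$ computed with multipliers up to $x_n$, so your orbit argument repackages the inclusion $\shad_t(N_1)\subset N$ that the paper also exploits; what your route buys is the complete elimination of both the compression step and the map $\alpha$, at the modest price of organizing the argument as an explicit double induction (outer on $d$, inner on $n$) where the paper runs a single induction on the number of variables with $d$ arbitrary.
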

\begin{proof}
  We first observe that $N=N_0\cup N_1x_n$ where $N_0$ is a $t$-spread strongly stable set of monomials of degree $d$ in the variables $x_1, x_2,\ldots, x_{n-1}$ and $N_1$ is a $t$-spread strongly stable set of monomials of degree $d-1$ in the variables $x_1,\ldots, x_{n-t}$. Similarly, one has the decomposition $L=L_0\cup L_1x_n$ where $L_0$ and $L_1$ are $t$-spread lex sets.

  We prove the theorem by induction on the number of variables. For $n=1$, the assertion is trivial. Now let $n>1$. Since $|L|=m_{\leq n}(L)$ and $|N|=m_{\leq n}(N)$, we obtain $m_{\leq n}(L)\leq m_{\leq n}(N)$.

  Note that $|L_0|=m_{\leq n-1}(L)$ and $|N_0|=m_{\leq n-1}(N)$. Thus in order to prove that $m_{\leq n-1}(L)\leq m_{\leq n-1}(N)$, we need to show that $|L_0|\leq |N_0|$. Moreover, if the inequality $|L_0|\leq |N_0|$ holds, then by applying the induction hypothesis we obtain \[m_{\leq i}(L)=m_{\leq i}(L_0)\leq m_{\leq i}(N_0)=m_{\leq i}(N)\] for every $i\in \{1,\ldots, n-1\}$.

  Let $N_0^*\subset M_{n-1, d, t}$ be a $t$-spread lex set with $|N_0^*|=|N_0|$ and $N_1^*\subset M_{n-t, d-1, t}$ be a $t$-spread lex set with $|N_1^*|=|N_1|$. We claim that $N^*=N_0^*\cup N_1^*x_n$ is again a $t$-spread strongly stable set of monomials. Indeed, we need to show that $\shad_t(N_1^*)\subset N_0^*$. By Lemma~\ref{shad}, it is clear that $\shad_t(N_1^*)$ is a $t$-spread lex set. Thus, it suffices to prove that $|\shad_t(N_1^*)|\leq |N_0^*|$. Since $N$ is $t$-spread strongly stable set, $\shad_t (N_1)\subset N_0$. We apply Lemma~\ref{shadow} and our induction hypothesis and we obtain
  \begin{align*}
    |\shad_t(N_1^*)|= &  \sum_{i=1+(d-1)t}^{n-t}m_{\leq i}(N_1^*)\\
     \leq  & \sum_{i=1+(d-1)t}^{n-t}m_{\leq i}(N_1)=|\shad_t(N_1)|\leq|N_0|=|N_0^*| \\
     &
  \end{align*}
  Thus, we completed the proof of the fact $N^*$ is a $t$-spread strongly stable set of monomials.

  Since $|N|=|N^*|$, we may replace $N$ by $N^*$ and then we can assume that $N_0$ is a $t$-spread lex set. We suppose that $n\neq (d-1)t+1$. Otherwise, $M_{n,d,t}=\{x_1x_{1+t}\cdots x_{1+(d-1)t} \}$ and the assertion is trivial.

  Let $m=x_{j_1}\cdots x_{j_d}$ be a $t$-spread monomial and $\alpha: M_{n,d,t}\rightarrow M_{n,d,t}$ be a map defined as follows:
  \begin{enumerate}
    \item if $j_d\neq n$, then $\alpha(m)=m$.
    \item if $j_d=n$ and there exists $r\in\{2,\ldots, d\}$ such that $j_r>j_{r-1}+t$, then we choose $r$ to be the largest integer with this property and define \[\alpha(m)=x_{j_1}\cdots x_{j_{r-1}}x_{j_r-1}\cdots x_{j_{d-1}-1}x_{n-1}.\]
    \item if $m=x_{n-(d-1)t}x_{n-(d-2)t}\cdots x_{n}$, then $\alpha(m)=x_{n-1-(d-1)t}x_{n-1-(d-2)t}\cdots x_{n-1}$.
  \end{enumerate}
  Then $\alpha$ is a lexicographic order preserving map. Indeed, if we take $m_1=x_{j_1}x_{j_2}\cdots x_{j_d}$ and $m_2=x_{k_1}x_{k_2}\cdots x_{k_d}$ with $m_1<_{lex}m_2$, then $\alpha(m_1)<_{lex}\alpha(m_2)$ in every case. For example, if $m_1\in M_{n-1,d,t}$ and $m_2$ is as in case $(2)$, then there exists $l\in \{1,\ldots, d\}$ such that $j_1=k_1, \ldots, j_{l-1}=k_{l-1}$ and $j_l>k_l$ and for
  \begin{itemize}
                        \item[(a)] $r>l$, we have $\alpha(m_1)<_{lex}\alpha(m_2)$, since $j_1=k_1, \ldots, j_{l-1}=k_{l-1}$ and $j_l>k_l$.
                        \item[(b)] $r<l$, we have $\alpha(m_1)<_{lex}\alpha(m_2)$, since $j_1=k_1, \ldots, j_{r-1}=k_{r-1}$ and $j_r=k_r>k_r-1$.
                        \item[(c)] $r=l$, we have $\alpha(m_1)<_{lex}\alpha(m_2)$, since $j_1=k_1, \ldots, j_{l-1}=k_{l-1}$ and $j_l>k_l>k_l-1$.
  \end{itemize}
  All the other cases can be treated in the same way.

  For a set of monomials $\mathcal{S}$ we denote by $\min \mathcal{S}$ the lexicographically smallest element in $\mathcal{S}$. Since both $L_0$ and $N_0$ are $t$-spread lex sets, the inequality  $|L_0|\leq |N_0|$ will follow once we have shown that $\min L_0\geq \min N_0$. Let $u=x_{k_1}\cdots x_{k_d}=\min L$ and $v=x_{j_1}\cdots x_{j_d}=\min N$. Then $\alpha(u)=\min L_0$ and $\alpha(v)=\min N_0$. Indeed, according to the three cases which define the map $\alpha$, we have:
  \begin{enumerate}
    \item $v\in N_0$ and $\alpha(v)=v\in N_0$.
    \item $v\in N_1x_n$ and $\alpha(v)=x_{j_1}\cdots x_{j_{k-1}}x_{j_k-1}\cdots x_{j_{d-1}-1}x_{n-1}$ with $k=\max\{r\;:\; j_r>j_{r-1}+t\text{ and }2\leq r\leq d\}$.

        If $k=d$, then $\alpha(v)=x_{j_1}\cdots x_{j_{d-1}}x_{n-1}=(v/x_n)x_{n-1}\in N_0$ because $N$ is a $t$-spread strongly stable set.

        If $k<d$, then we set $v_1=x_{j_k-1}(v/x_{j_k})\in N$, $v_2=x_{j_{k+1}-1}(v_1/x_{j_{k+1}})\in N$, $\ldots, v_{d-k}=x_{j_{d-1}-1}(v_{d-k-1}/x_{j_{d-1}})\in N$ and $\alpha(v)=x_{n-1}(v_{d-k}/x_n)\in N_0$, since $N$ is a $t$-spread strongly stable set.
    \item $v\in N_1x_n$ and $\alpha(v)=x_{j_1-1}\cdots x_{j_{d-1}-1}x_{n-1}$ with $j_r=j_{r-1}+t$ for all $2\leq r\leq d$. Since $N$ is a $t$-spread strongly stable set, $v_1=x_{j_1-1}(v/x_{j_1})\in N$, $v_2=x_{j_2-1}(v_1/x_{j_2})\in N$, $\ldots, v_{d-1}=x_{j_{d-1}-1}(v_{d-2}/x_{j_{d-1}})\in N$ and $\alpha(v)=x_{n-1}(v_{d-1}/x_n)\in N_0$.
  \end{enumerate}
 Then $\min N_0\leq \alpha(v)$ and $\min N_0\geq v=\min N$. Thus, $\alpha(\min N_0)=\min N_0\geq \alpha (v)$ and $\min N_0=\alpha(v)$. Similarly, we obtain $\min L_0=\alpha(u)$.

 Finally we observe that $u\geq v$, since $L$ is a $t$-spread lex set and $|L|\leq |N|$. Hence we conclude that $\min L_0=\alpha(u)\geq \alpha(v)=\min N_0$, as desired.
\end{proof}
\begin{Example}\label{imp}
Let
\begin{align*}
  I = &(x_1x_3x_5, x_1x_3x_6, x_1x_3x_7, x_1x_3x_8, x_1x_4x_6, x_1x_4x_7, x_1x_4x_8, \\
    & x_2x_4x_6, x_2x_4x_7, x_2x_4x_8)\subset \KK[x_1,\ldots, x_8].
\end{align*}
Then $I$ is a $2$-spread strongly stable ideal in $\KK[x_1,\ldots, x_8]$ and
\begin{align*}
  I^{\tlex}=&(x_1x_3x_5,x_1x_3x_6,x_1x_3x_7,x_1x_3x_8,x_1x_4x_6,x_1x_4x_7,x_1x_4x_8,  \\
   & x_1x_5x_7,x_1x_5x_8,x_1x_6x_8,x_2x_4x_6x_8).
\end{align*}
Indeed, we have
\begin{align*}
  B_3=&L_3= \{x_1x_3x_5,x_1x_3x_6,x_1x_3x_7,x_1x_3x_8,x_1x_4x_6,x_1x_4x_7, \\
   & x_1x_4x_8,x_1x_5x_7,x_1x_5x_8,x_1x_6x_8\} \\
  B_4=&L_4\cup \shad_0(B_3)= \{x_1x_3x_5x_7,x_1x_3x_5x_8, x_1x_3x_6x_8,x_1x_4x_6x_8,\\
   &x_2x_4x_6x_8\}\cup \shad_0(B_3)\text{ and }\\
   B_j = & \shad_0(B_{j-1})\text{ for all }j\geq 5,
\end{align*}
since
\begin{align*}
  [I_3]_2 = & \{x_1x_3x_5, x_1x_3x_6, x_1x_3x_7, x_1x_3x_8, x_1x_4x_6, x_1x_4x_7, x_1x_4x_8,\\
   & x_2x_4x_6, x_2x_4x_7,x_2x_4x_8\}, \\
  [I_4]_2 = & \{x_1x_3x_5x_7,x_1x_3x_5x_8,x_1x_3x_6x_8,x_1x_4x_6x_8,x_2x_4x_6x_8\}\text{ and }\\
  [I_j]_2= &  \emptyset\text{ for all } j\geq 5.
\end{align*}

\end{Example}
\begin{Remark}\label{tgeq2}
In general, a $t$-spread monomial ideal may not have a $t$-spread lex ideal with the same $f_t$-vector.

For example, if $I=(x_2x_8, x_2x_6, x_2x_4)\subset \KK[x_1,\ldots x_8]$, then we have $B_2=L_2=\{x_1x_3, x_1x_4,x_1x_5\}$ and $|\shad_2(B_2)|=9>5=|[I_3]_2|$, which leads to the impossibility to construct the $2$-spread lex ideal with the same $f_2$-vector with $I$.

However, there exist $t$-spread monomial ideals which are not strongly stable, but we can construct for them a $t$-spread lex ideal with the same $f_t$-vector. For example, if we consider the $3$-spread monomial ideal, $I=(x_1x_7, x_2x_6, x_3x_6)\subset\KK[x_1,\ldots, x_7]$, which is not strongly stable because $x_1x_6\notin I$, then we obtain $I^{\tlex}=(x_1x_4, x_1x_5, x_1x_6)$.
\end{Remark}

\section{Possible $f$-vectors of a $t$-spread strongly stable ideal}

In this section, we will give a complete answer to the following question: When is a given sequence of positive integers $f_t=(f_{t,-1}, f_{t,0}, f_{t,1},\ldots, f_{t,d}, \ldots)$ the $f_t$-vector of a $t$-spread strongly stable ideal?

To answer this question, we would like to proceed like in the proof of Kruskal-Katona Theorem given in \cite{HHBook}. To this aim, we need to define a \emph{"$t$-operator"} analog to the operator $a\rightarrow a^{(d)}$ which is involved in the proof of Kruskal-Katona Theorem.
Let us recall the so-called binomial or Macaulay expansion of a positive integer.

\begin{Lemma}\label{expansion}\cite[Lemma 6.3.4]{HHBook}
Let $d$ be a positive integer. Then each positive integer $a$ has a unique expansion
\[a={a_d \choose d}+{{a_{d-1}} \choose {d-1}}+\cdots + {a_r \choose r}\]
with $a_d>a_{d-1}>\ldots>a_r\geq r\geq 1$. This expansion is called the binomial or Macaulay expansion of $a$ with respect to $d$.
\end{Lemma}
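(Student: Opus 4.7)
My plan is to prove both existence and uniqueness by induction on $d$ using a greedy algorithm. For existence, given $a\geq 1$ and $d\geq 1$, I define $a_d$ to be the largest integer satisfying $\binom{a_d}{d}\leq a$; this is well defined since $\binom{d}{d}=1\leq a$ while $\binom{n}{d}\to\infty$. Setting $a'=a-\binom{a_d}{d}\geq 0$, I terminate with $r=d$ when $a'=0$. Otherwise I apply the induction hypothesis in degree $d-1$ to $a'$, obtaining an expansion with leading index $a_{d-1}$. The required strict inequality $a_{d-1}<a_d$ is forced by the maximality of $a_d$: if $a_{d-1}\geq a_d$, then $a'\geq\binom{a_d}{d-1}$, and Pascal's identity $\binom{a_d}{d}+\binom{a_d}{d-1}=\binom{a_d+1}{d}$ would give $a\geq\binom{a_d+1}{d}$, contradicting the choice of $a_d$.

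For uniqueness, I need to show that in any valid expansion the leading index $a_d$ coincides with the greedy choice, i.e.\ $a<\binom{a_d+1}{d}$, equivalently $\sum_{i=r}^{d-1}\binom{a_i}{i}<\binom{a_d}{d-1}$. Under the strictly decreasing condition we have $a_i\leq a_d-(d-i)$ for each $i$, so it suffices to establish the telescoping identity
\[
\sum_{i=r}^{d-1}\binom{a_d-(d-i)}{i}=\binom{a_d}{d-1}-\binom{a_d-(d-r)}{r-1},
\]
which I would verify by a short induction on $d-r$ using Pascal's rule $\binom{m}{k}=\binom{m-1}{k}+\binom{m-1}{k-1}$. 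Because the strict chain $a_d>a_{d-1}>\cdots>a_r\geq r$ forces $a_d\geq d$ and hence $a_d-(d-r)\geq r$, the subtracted term $\binom{a_d-(d-r)}{r-1}$ is positive, yielding the required strict inequality. Once $a_d$ is uniquely pinned down, the remainder $a-\binom{a_d}{d}$ has a unique expansion with respect to $d-1$ by the induction hypothesis, which determines $a_{d-1},\ldots,a_r$.

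The main obstacle, as I see it, is not any deep idea but the bookkeeping in the telescoping identity and its base case. I plan to check $r=d-1$ directly (where both sides equal $\binom{a_d-1}{d-1}$, matching $\binom{a_d}{d-1}-\binom{a_d-1}{d-2}$ via one application of Pascal) and then iterate. All other verifications — termination of the greedy recursion, nonnegativity of the remainder $a'$, and the bound $a_i\leq a_d-(d-i)$ — are immediate consequences of the strictly decreasing chain.
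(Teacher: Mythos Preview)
Your proof is correct. Note, however, that the paper itself does not prove this lemma: it is stated with a citation to \cite[Lemma~6.3.4]{HHBook} and no argument is given. Your greedy-algorithm approach---choosing $a_d$ maximal with $\binom{a_d}{d}\leq a$, then recursing on the remainder---is precisely the standard proof found in that reference, so there is nothing to compare. The telescoping identity you isolate,
\[
\sum_{i=r}^{d-1}\binom{a_d-(d-i)}{i}=\binom{a_d}{d-1}-\binom{a_d-(d-r)}{r-1},
\]
does follow from iterated applications of Pascal's rule as you describe, and the positivity of the subtracted term is exactly what forces the strict inequality needed for uniqueness. One small remark: in the existence step you should also note that the base case $d=1$ is immediate (take $a_1=a$, $r=1$), so that the induction has somewhere to start; you implicitly assume this when invoking ``the induction hypothesis in degree $d-1$.''
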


Let $a={a_d \choose d}+{{a_{d-1}} \choose {d-1}}+\cdots + {a_r \choose r}$ be the binomial expansion of $a$ with respect to $d$. Then one defines the binomial operator $a\rightarrow a^{(d)}$ by \[a^{(d)}={a_d \choose {d+1}}+{{a_{d-1}} \choose {d}}+\cdots + {a_r \choose {r+1}}.\] For convenience, $0^{(d)}=0$ for all positive integers $d$.

\begin{Theorem}\label{clasicKK}\cite[Theorem 6.4.5 (Kruskal-Katona)]{HHBook}
Let $f=(f_{-1},f_0,f_1, \ldots, f_{d-1})$ be a sequence of positive integers. Then the following conditions are equivalent:
\begin{enumerate}
  \item[\emph{(1)}] There exists a simplicial complex $\Delta$ with the $f$-vector of $\Delta$, $f(\Delta)=f$;
  \item[\emph{(2)}] $f_{-1}=1$ and $f_{j+1}\leq f_j^{(j+1)}$ for $0\leq j\leq d-2$.
\end{enumerate}
\end{Theorem}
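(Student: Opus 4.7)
The plan is to mirror the strategy used for the $t$-spread analogue (Theorem~\ref{Kruskalkatona}) in the special case $t=1$, exploiting the fact that squarefree monomials are precisely the $1$-spread monomials. A simplicial complex $\Delta$ on $[n]$ corresponds to the Stanley-Reisner ideal $I_\Delta\subset S$, which is a $1$-spread monomial ideal, and the classical $f$-vector of $\Delta$ coincides with ${\fb}_1(I_\Delta)$.

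For $(1) \Rightarrow (2)$, I would first reduce to the squarefree lex case. Invoke the $t=1$ case of Theorem~\ref{exists}: every $1$-spread strongly stable ideal admits a $1$-spread lex ideal with the same $f_1$-vector. Standard squarefree-strongly-stabilization (as in \cite[Chapter~6]{HHBook}) allows one to replace $I_\Delta$ first by a strongly stable squarefree ideal with the same $f_1$-vector, and then by a squarefree lex ideal. Thus one may assume that the ideal of non-faces is squarefree lex. In this situation, a direct calculation built from Lemma~\ref{shadow} with $t=1$ expresses the cardinality of the squarefree shadow of a lex segment of size $a$ in degree $d$ as exactly $a^{(d)}$. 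Applied inductively to the non-face sets in consecutive degrees, this forces $f_{j+1}\leq f_j^{(j+1)}$ for every $j$.

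For $(2) \Rightarrow (1)$, the construction is explicit. Choose $n$ large enough that all relevant Macaulay expansions are realized inside $M_{n,d,1}$. For each degree $d\geq 1$, let $L_d\subset M_{n,d,1}$ denote the $1$-spread lex set of cardinality $\binom{n}{d}-f_{d-1}$, which will represent the squarefree non-face monomials of degree $d$. The hypothesis $f_{j+1}\leq f_j^{(j+1)}$, together with Lemma~\ref{shad}(b) at $t=1$ and the binomial identity for lex shadows mentioned above, guarantees $\shad_1(L_d)\subseteq L_{d+1}$ for all $d$. Therefore the monomial ideal $I$ generated by $\bigcup_d L_d$ is a well-defined squarefree lex ideal whose associated simplicial complex $\Delta$ (defined by $I=I_\Delta$) has $\fb_1(I)=f$, hence $f(\Delta)=f$.

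The main obstacle is the single combinatorial identity behind both implications: for a squarefree lex set $L\subset M_{n,d,1}$ of size $a=\binom{a_d}{d}+\binom{a_{d-1}}{d-1}+\cdots+\binom{a_r}{r}$, one must prove $|\shad_1(L)|=a^{(d)}$. This is shown by induction on $d$, partitioning $L$ according to whether the variable $x_1$ occurs and applying the Pascal-style recursion of binomial coefficients, matching it termwise with the recursion of the Macaulay expansion. Once this identity is in hand, both implications follow from the reductions described above.
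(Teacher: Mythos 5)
Your architecture is the right one and is essentially what the paper intends: the paper itself does not prove Theorem~\ref{clasicKK} (it is quoted from \cite{HHBook}), and Remark~\ref{clasica} merely checks that the $t=1$ case of Theorem~\ref{Kruskalkatona} reproduces it. Specializing Theorem~\ref{exists} and Proposition~\ref{cardshadow} to $t=1$, after first replacing $I_\Delta$ by a squarefree strongly stable ideal with the same $f$-vector, is the intended route; note, though, that this stabilization step is the one ingredient the paper's machinery does not supply (Theorem~\ref{exists} starts from a strongly stable ideal), and it is exactly the step that fails for $t\geq 2$ by Remark~\ref{tgeq2}, so it must be imported from \cite{HHBook} as an external result rather than obtained for free.

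The genuine gap is that the combinatorial identity you isolate as the engine of both implications is false as stated. For a squarefree lex set $L\subset M_{n,d,1}$ of cardinality $a$ it is \emph{not} true that $|\shad_1(L)|=a^{(d)}$: take $n=4$, $d=2$ and $L=\{x_1x_2\}$, so that $a=1={2\choose 2}$ and $a^{(2)}={2\choose 3}=0$, whereas $\shad_1(L)=\{x_1x_2x_3,\,x_1x_2x_4\}$ has two elements. The identity you actually need is the complement version proved in Proposition~\ref{cardshadow} (via Lemma~\ref{cardLu}) and specialized in Remark~\ref{clasica}: if $L$ is lex and $a=|M_{n,d,1}\setminus L|$, then $|M_{n,d+1,1}\setminus\shad_1(L)|=a^{(d)}$. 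Your two applications are in fact consistent with this corrected statement --- in $(1)\Rightarrow(2)$ you bound $f_{j+1}$ by the number of degree-$(j+2)$ monomials outside $\shad_1$ of the non-face lex set, and in $(2)\Rightarrow(1)$ you take $|L_d|={n\choose d}-f_{d-1}$, so that $f_{d-1}$ is the size of the complement --- so the slip is repairable; but the induction you sketch (partition by occurrence of $x_1$, Pascal recursion) must be run on $|M_{n,d,1}\setminus L_u|$ as in Lemma~\ref{cardLu}, not on $|\shad_1(L)|$, and as written the central lemma of your proof would not survive an attempt to prove it.
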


Since every square-free monomial ideal has a square-free lexsegment ideal with the same $f_1$-vector, the proof of Theorem~\ref{clasicKK} given in \cite{HHBook} is available for all square-free monomial ideals. By Remark~\ref{tgeq2}, a $t$-spread ideal may not have an associated $t$-spread lex ideal with the same $f_t$-vector. Therefore, we will restrict to $t$-spread strongly stable ideals.

\bigskip
As in the ordinary square-free case, the binomial expansions naturally appear in the context of the $t$-spread lex sets. Let $u=x_{i_1}\cdots x_{i_d}\in M_{n,d,t}$. We denote by $L_u$ the $t$-spread lex set $\{v\in M_{n,d,t}: v\geq_{\lex}u\}$.
\begin{Lemma}\label{cardLu}
Let $u=x_{i_1}\cdots x_{i_d}\in M_{n,d,t}$. Then
\[M_{n,d,t}\setminus L_u = \bigcup_{k=1}^d \{v=x_{j_k}x_{j_{k+1}}\cdots x_{j_d}: v\in M_{n,d-k+1,t}, j_k>i_k\}\prod_{l=1}^{k-1}x_{i_l}.\]

This union is disjoint and in particular, we obtain
\[|M_{n,d,t}\setminus L_u|=\sum_{j=1}^{d}{a_j \choose j}\] with $a_j={n-i_{d-j+1}-(j-1)(t-1)}$ for every $j\in\{1,\ldots, d\}$.
\end{Lemma}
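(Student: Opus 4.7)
The plan is to read off the disjoint-union description from the definition of the lex order, and then count each piece using the standard formula for the number of $t$-spread monomials on a block of consecutive variables.

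First I would prove the set-theoretic decomposition. Fix $v=x_{j_1}\cdots x_{j_d}\in M_{n,d,t}$ with $j_1<\cdots<j_d$. Since $v\notin L_u$ means $v<_{\lex}u$, there is a unique $k\in\{1,\ldots,d\}$ with $j_l=i_l$ for $l<k$ and $j_k>i_k$. This assigns $v$ to the $k$-th set on the right-hand side of the claimed identity. Conversely, any monomial of the form $x_{i_1}\cdots x_{i_{k-1}}\cdot x_{j_k}\cdots x_{j_d}$ with $j_k>i_k$ and $x_{j_k}\cdots x_{j_d}\in M_{n,d-k+1,t}$ is automatically $t$-spread, because $j_k>i_k\geq i_{k-1}+t$ guarantees the spread condition across the boundary between the prefix and the suffix; and it lies strictly below $u$ in the lex order by the same $k$. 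Disjointness of the union is immediate from the uniqueness of this first-disagreement index $k$.

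Next I would count each piece. The $k$-th piece is in bijection with the set of $t$-spread monomials of degree $e:=d-k+1$ in the $N:=n-i_k$ consecutive variables $x_{i_k+1},\ldots,x_n$. The number of $t$-spread monomials of degree $e$ in $N$ consecutive variables is $\binom{N-(e-1)(t-1)}{e}$, via the well-known bijection $(j_1<\cdots<j_e)\mapsto(j_1,j_2-(t-1),\ldots,j_e-(e-1)(t-1))$ that turns a $t$-spread increasing sequence into an arbitrary strictly increasing sequence in $\{1,\ldots,N-(e-1)(t-1)\}$. This yields $\binom{n-i_k-(d-k)(t-1)}{d-k+1}$ elements in the $k$-th piece.

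Finally, reindexing with $j=d-k+1$ (so $k=d-j+1$) converts the count of the $k$-th piece into $\binom{a_j}{j}$ where $a_j=n-i_{d-j+1}-(j-1)(t-1)$, and summing from $j=1$ to $d$ gives $|M_{n,d,t}\setminus L_u|=\sum_{j=1}^d\binom{a_j}{j}$. The only mildly delicate step is the first one (verifying that any suffix satisfying the spread condition among $j_k,\ldots,j_d$ automatically glues to the prefix $x_{i_1}\cdots x_{i_{k-1}}$ to form a $t$-spread monomial); everything else is straightforward bookkeeping.
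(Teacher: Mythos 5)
Your proof is correct and takes essentially the same route as the paper: the identical decomposition of $M_{n,d,t}\setminus L_u$ according to the first index $k$ at which $v$ disagrees with $u$, followed by the identical count of the $k$-th piece as $|M_{n-i_k,\,d-k+1,\,t}|=\binom{n-i_k-(d-k)(t-1)}{d-k+1}$ and the same reindexing. The only cosmetic difference is that you obtain the decomposition directly from the first-disagreement description of the lex order (correctly checking the gluing condition $j_k>i_k\geq i_{k-1}+t$), whereas the paper packages the same observation as an induction on $d$ and cites the cardinality of $M_{n,d,t}$ rather than rederiving it; both are fine.
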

\begin{proof}
We will prove the equality between sets by induction on $d$. If $d=1$, then $M_{n,1,t}=\{x_{i_1+1},x_{i_1+2}, \ldots, x_{n}\}$, as desired.

If $d>1$, then
\[M_{n,d,t}\setminus L_u=\{v=x_{j_1}\cdots x_{j_d}: v\in M_{n,d,t}\text{ and }j_1>i_1\}\cup\]\[\cup (M_{n,d-1,t}\setminus L_{ux_{i_1}^{-1}})x_{i_1}\] and we may apply the induction hypothesis for $M_{n,d-1,t}\setminus L_{ux_{i_1}^{-1}}$.

In order to compute the cardinality of $M_{n,d,t}\setminus L_{u}$, we recall that $|M_{n,d,t}|={{n-(d-1)(t-1)}\choose d}$ by \cite[Theorem 2.3]{EHQ}. Thus, \[|M_{n,d,t}\setminus L_u|=\sum_{k=1}^{d}|M_{n-i_k,d-k+1,t}|=\sum_{k=1}^{d}{{n-i_k-(d-k)(t-1)}\choose {d-k+1}}=\]\[=\sum_{j=1}^{d}{{n-i_{d-j+1}-(j-1)(t-1)}\choose j}=\sum_{j=1}^{d}{a_j\choose j}.\]
\end{proof}

\begin{Remark}
In the previous lemma, let $r\geq1$ be the smallest integer for which $a_r>r-1$. Then the expansion $|M_{n,d,t}\setminus L_u|=\sum_{j=r}^{d}{a_j \choose j}$ is the binomial expansion of $|M_{n,d,t}\setminus L_u|$ with respect to $d$. Indeed, we have \[a_j-a_{j-1}=n-i_{d-j+1}-(j-1)(t-1)-n+i_{d-j+2}+(j-2)(t-1)=\]\[=i_{d-j+2}-i_{d-j+1}-t+1\geq t-t+1=1\text{ and }\]\[a_j=n-i_{d-j+1}-(j-1)(t-1)\geq (j-1)(t-1)\geq i_d-i_{d-j+1}-(j-1)t+j-1\geq j-1\]
for every $j$. Moreover, ${{a_j}\choose j}=0$ for $j<r$.
\end{Remark}

\begin{Definition}\label{power}
Let $n,d,t$ and $a$ be positive integers with $a\leq{{n-(d-1)(t-1)}\choose d}$. If $a={a_d \choose d}+{{a_{d-1}} \choose {d-1}}+\cdots + {a_r \choose r}$ is the binomial expansion of $a$ with respect to $d$, then we set $a_{r-1}=r-2$, $a_{d+1}=n-(d-1)(t-1)$ and $a_{d+2}=a_{d+1}+(t+1)$ and we define
\[a^{[d]_t}:=a^{[d]_t^k},\] where $k$ is the largest integer of the interval $[-1,d-r+1]$ with the property that $a_{d-k+1}-a_{d-k}\geq t+1$ and
\[a^{[d]_t^k}:=\sum_{j=d+1-k}^{d}{{a_j-(t-1)}\choose j+1}+{a_{d-k}-(2t-1)\choose {d-k+1}}+\sum_{j=r}^{d-k}{a_j\choose j}\]
for all $k\geq 0$ and
\[a^{[d]_t^{-1}}:={{n-d(t-1)}\choose{d+1}}.\]
\end{Definition}

Again for convenience, we set $0^{[d]_t}=0$ for positive integers $d$ and $t$.

\begin{Example}
We consider $n=28$, $d=8$, $t=3$ and $a=2018$ and we notice that $a=2018\leq 3003={{14}\choose{8}}={{n-(d-1)(t-1)}\choose d}$. Then the binomial expansion of $a$ with respect to $d$ is
$$2018={13 \choose 8}+{11 \choose 7}+{10 \choose 6}+{9 \choose 5}+{7 \choose 4}+{6 \choose 3}+{5 \choose 2}.$$
So, we obtain $a_1=0$, $a_2=5$, $a_3=6$, $a_4=7$, $a_5=9$, $a_6=10$, $a_7=11$, $a_8=13$, $a_9=14$ and $a_{10}=18$. Since $a_2-a_1=5\geq t+1=4$, the largest integer of the interval $[-1, 7]=[-1, 8-2+1]$, with the property that $a_{d-k+1}-a_{d-k}\geq t+1$, is $k=7$. Thus, we have $$a^{[d]_t}=2018^{[8]_3^7}=\sum_{j=2}^{8}{{a_j-2}\choose{j+1}}+{{a_1-5}\choose{2}}=$$$$={11 \choose 9}+{9 \choose 8}+{8 \choose 7}+{7 \choose 6}+{5 \choose 5}+{4 \choose 4}+{3 \choose 3}=82.$$
\end{Example}

The definition of the binomial operator $a\rightarrow a^{[d]_t}$ is justified by the following result.

\begin{Proposition}\label{cardshadow}
Let $\emptyset\neq L\subset M_{n,d,t}$ be a $t$-spread lex set with $a=|M_{n,d,t}\setminus L|$. Then
\[|M_{n,d+1,t}\setminus \shad_t(L)|=a^{[d]_t}.\]
\end{Proposition}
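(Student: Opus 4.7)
The idea is to reduce the computation of $|M_{n,d+1,t} \setminus \shad_t(L)|$ to Lemma~\ref{cardLu} applied to the lex-smallest element of $\shad_t(L)$. By Lemma~\ref{shad}(b), $\shad_t(L)$ is itself a $t$-spread lex set in $M_{n,d+1,t}$, so either $\shad_t(L) = \emptyset$ or $\shad_t(L) = \{w \in M_{n,d+1,t} : w \geq_{\lex} u'\}$ for a unique lex-smallest element $u'$. If it is empty, then $|M_{n,d+1,t} \setminus \shad_t(L)| = |M_{n,d+1,t}| = \binom{n - d(t-1)}{d+1} = a^{[d]_t^{-1}}$, matching the $k=-1$ branch of Definition~\ref{power}. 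Otherwise, Lemma~\ref{cardLu} (applied in $M_{n,d+1,t}$) writes the complement as $\sum_{j=1}^{d+1} \binom{a'_j}{j}$ with $a'_j = n - i'_{d+2-j} - (j-1)(t-1)$, where $u' = x_{i'_1}\cdots x_{i'_{d+1}}$.

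The heart of the proof is to identify $u'$ explicitly from $u = x_{i_1}\cdots x_{i_d} = \min L$. Setting $a_j = n - i_{d-j+1} - (j-1)(t-1)$ as in Lemma~\ref{cardLu}, the identity $a_j - a_{j-1} = i_{d-j+2} - i_{d-j+1} - (t-1)$ translates the gap condition $a_{d-k+1} - a_{d-k} \geq t+1$ of Definition~\ref{power} into the geometric statement that the indices of $u$ have enough slack at position $d-k+1$ to accommodate a new variable while preserving $t$-spreadness. The integer $k$ is the largest index at which such slack is available, and it prescribes how many trailing indices of $u$ must be modified in order to append $x_n$ and still land inside $\shad_t(L)$. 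For $k \geq 0$, the candidate $u'$ has the form $u' = (i_1,\dots,i_{d-k},\,j_{d-k+1},\dots,j_d,\,n)$, with $j_{d-k+1},\dots,j_d$ chosen lex-maximally so that $u'$ is $t$-spread and $u'/x_j \in L$ for an appropriate $j$.

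Given this $u'$, one reads off the $a'_j$ from Lemma~\ref{cardLu} in terms of the original $a_j$'s: the unshifted prefix of $u'$ yields $a'_j = a_j$ for $j \in [r, d-k]$, the boundary index at position $d-k+1$ produces $a'_{d-k+1} = a_{d-k} - (2t-1)$, and the modified tail yields $a'_j = a_{j-1} - (t-1)$ for $j \in [d-k+2, d+1]$. Substituting into $\sum_{j=1}^{d+1}\binom{a'_j}{j}$ recovers term by term the three-piece sum defining $a^{[d]_t^k}$ in Definition~\ref{power}, completing the proof.

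The main obstacle is the optimality argument for $u'$: after producing a candidate and verifying $u' \in \shad_t(L)$, one must rule out that some lex-strictly-smaller $w \in M_{n,d+1,t}$ also belongs to $\shad_t(L)$. This is exactly where the choice of $k$ as the largest integer in $[-1,\,d-r+1]$ satisfying the gap condition of Definition~\ref{power} enters: for any $k' > k$ the condition fails, and the corresponding candidate monomial either ceases to be $t$-spread or loses its required sub-monomial in $L$, so no valid lex-smaller element of $\shad_t(L)$ exists.
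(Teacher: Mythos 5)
Your strategy is the same as the paper's: since $\shad_t(L)$ is again a $t$-spread lex set by Lemma~\ref{shad}, write $L=L_u$ with $u=\min L$, identify $u'=\min\shad_t(L)$ explicitly, and feed $u'$ into Lemma~\ref{cardLu}. The genuine gap is exactly the step you flag as ``the main obstacle'' and then dispose of in one sentence: you determine $u'$ by examining only which modifications of $u$ stay $t$-spread and retain a divisor in $L$. But $\shad_t(L)$ collects the products $x_iv$ for \emph{every} $v\in L$, and a lex-larger $v$ can produce a lex-smaller product than anything obtainable from $u$; the maximality of $k$ in Definition~\ref{power} controls only the candidates built from $u$ and says nothing about these other contributions. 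Concretely, take $n=6$, $d=2$, $t=2$ and $L=L_{x_2x_5}=\{x_1x_3,x_1x_4,x_1x_5,x_1x_6,x_2x_4,x_2x_5\}$, so $a=4=\binom{3}{2}+\binom{1}{1}$. Here $u=x_2x_5$ has no $t$-spread multiple at all in $M_{6,3,2}$, so every candidate manufactured from $u$ fails and your argument would conclude $\shad_2(L)=\emptyset$; yet $x_1x_3x_5=x_5\cdot(x_1x_3)$, $x_1x_3x_6$, $x_1x_4x_6$ and $x_2x_4x_6$ all lie in $\shad_2(L)$, so $\shad_2(L)=M_{6,3,2}$ and $|M_{6,3,2}\setminus\shad_2(L)|=0$, while $a^{[2]_2}=\binom{4}{3}=4$ (the largest admissible $k$ here is $-1$). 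So the optimality claim is not merely unproved; it fails, and no bookkeeping with the $a_j'$'s downstream of it can repair the argument.

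Two further points. First, your explicit shape for $u'$, namely $(i_1,\dots,i_{d-k},j_{d-k+1},\dots,j_d,n)$ ending in $x_n$, does not match the monomial the paper actually uses: there $u'=u\cdot x_{i_{k+1}-t}$, which keeps all $d$ indices of $u$, has largest index $i_d$ (not necessarily $n$), and gains one inserted index. In the paper's own Example~\ref{imp} one has $u=x_1x_6x_8$, $k=1$ and $\min\shad_2(L)=x_1x_4x_6x_8$, which is not of your form; so even before the optimality issue, the identification of $u'$ and the resulting dictionary $a_j\mapsto a_j'$ would need to be reworked. Second, you should be aware that the published proof makes the same reduction to products of $\min L$ (its fourth case asserts $\shad_t(L)=\emptyset$ under hypotheses satisfied by the example above), so agreement with the paper is not by itself a defense here. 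A robust count of $|\shad_t(L)|$ for a lex (hence strongly stable) set is available from Lemma~\ref{shadow} as $\sum_{i}m_{\leq i}(L)$, and any closed formula such as $a^{[d]_t}$ has to be checked against small cases like the one exhibited.
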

\begin{proof}
To begin with, we notice that $a<{{n-(d-1)(t-1)}\choose d}$, since $L\neq\emptyset$ and $|M_{n,d,t}|={{n-(d-1)(t-1)}\choose d}$ by \cite[Theorem 2.3]{EHQ}.

Let $a={a_d \choose d}+{{a_{d-1}} \choose {d-1}}+\cdots + {a_r \choose r}$ be the binomial expansion of $a$ with respect to $d$. Since $a<{{n-(d-1)(t-1)}\choose d}$, we have $a_d<n-(d-1)(t-1)$. Thus, we obtain $n-a_d-(d-1)(t-1)\geq 1$. Let us consider the $t$-spread monomial $u=x_{i_1}\cdots x_{i_d}$ where
\begin{enumerate}
  \item[(a)] $i_1:=n-a_d-(d-1)(t-1),i_2:=n-a_{d-1}-(d-2)(t-1),\ldots,i_k:=n-a_{d-k+1}-(d-k)(t-1),\ldots,i_{d-1}:=n-a_2-(t-1), i_d:=n-a_1$, if $r=1$;
  \item[(b)] $i_1:=n-a_d-(d-1)(t-1),i_2:=n-a_{d-1}-(d-2)(t-1),\ldots,i_{d-r+1}:=n-a_{r}-(r-1)(t-1),i_{d-r+2}:=n-(r-2)t,\ldots, i_d=n$, if $r>1$.
\end{enumerate}

By Lemma~\ref{cardLu}, $|M_{n,d,t}\setminus L_u|=a=|M_{n,d,t}\setminus L|$ and $L=L_u$, since $L$ and $L_u$ are $t$-spread lex sets.

According to Lemma~\ref{shad}, $\shad_t(L)$ is a $t$-spread lex set and with Lemma~\ref{cardLu}, we obtain the following four cases:
\begin{enumerate}
  \item If $r=1$ and $a_1\geq t$, then $n-i_d\geq t$. This follows that $\shad_t(L)=L_{ux_n}$ and \[|M_{n,d+1,t}\setminus \shad_t(L)|=|M_{n,d+1,t}\setminus L_{ux_n}|=\]\[=\sum_{j=2}^{d+1}{{n-i_{d+1-j+1}-(j-1)(t-1)}\choose j}=\sum_{j=2}^{d+1}{{a_{j-1}-(t-1)}\choose {j}}=\]\[=\sum_{j=1}^{d}{{a_j-(t-1)}\choose {j+1}}=a^{[d]_t^d}.\]
      Since $a_1-a_0=a_1+1\geq t+1$ and $k=d$ is the largest integer of the interval $[-1,d]$ with the property that $a_{d-k+1}-a_{d-k}\geq t+1$, we have $|M_{n,d+1,t}\setminus \shad_t(L)|=a^{[d]_t}$, as desired.
  \item If $(r\neq 1\text{ or }(r=1\text{ and }a_1<t))$ and there exists $k\in\{1,\ldots, d-r+1\}$ such that $a_{d-k+1}-a_{d-k}\geq t+1$, then we choose the largest $k\in [-1,d-r+1]$ with this property and we have $n-i_d<t$ and $i_{k+1}-i_k\geq 2t$. Thus,
      we obtain $\shad_t(L)=L_{ux_{i_{k+1}-t}}$ and \[|M_{n,d+1,t}\setminus \shad_t(L)|=|M_{n,d+1,t}\setminus L_{ux_{i_{k+1}-t}}| = \]\[= \sum_{j=1}^{d-k}{{n-i_{d-j+1}-(t-1)(j-1)}\choose{j}} + {{n-i_{k+1}-t-(t-1)(d-k)}\choose{d+1-k}} +\]\[+ \sum_{j=d+2-k}^{d+1}{{n-i_{d+1-j+1}-(t-1)(j-1)}\choose{j}} =\sum_{j=r}^{d-k}{{a_j}\choose{j}}+\]\[+{{a_{d-k}-(2t-1)}\choose{d-k+1}}+\sum_{j=d-k+1}^{d}{{a_j-(t-1)}\choose{j+1}}=a^{[d]_t^k}=a^{[d]_t}.\]
  \item If $(r\neq 1\text{ or }(r=1\text{ and }a_1<t))$, $a_{d-k+1}-a_{d-k}<t+1$ for all $k\in\{1,\ldots, d-r+1\}$ and $a_d\leq n-d(t-1)-2$, then $n-i_d<t$, $i_{k+1}-i_k<2t$ for all $k\in\{1,\ldots, d-1\}$ and $i_1-t\geq 1$. This implies that $\shad_t(L)=L_{ux_{i_1-t}}$ and \[|M_{n,d+1,t}\setminus \shad_t(L)|=|M_{n,d+1,t}\setminus L_{ux_{i_1-t}}|=\]\[=\sum_{j=1}^{d}{{n-i_{d-j+1}-(j-1)(t-1)}\choose j}+{{n-i_1-t-(t-1)d}\choose{d+1}}=\]\[=\sum_{j=r}^{d}{{a_j}\choose {j}}+{{a_d-(2t-1)}\choose{d+1}}=a^{[d]_t^0}.\] Since $a_{d-k+1}-a_{d-k}<t+1$ for all $k\in\{1,\ldots, d-r+1\}$ and $a_{d+1}-a_{d}=n-(d-1)(t-1)-a_{d}\geq n-(d-1)(t-1)-n-d(t-1)-2=t+1$, we obtain $|M_{n,d+1,t}\setminus \shad_t(L)|=a^{[d]_t^0}=a^{[d]_t}$, as desired.
  \item If $(r\neq 1\text{ or }(r=1\text{ and }a_1<t))$, $a_{d-k+1}-a_{d-k}<t+1$ for all $k\in\{1,\ldots, d-r+1\}$ and $a_d>n-d(t-1)-2$, then $n-i_d<t$, $i_{k+1}-i_k<2t$ for all $k\in\{1,\ldots, d-1\}$ and $i_1-t<1$. Therefore, $\shad_t(L)=\emptyset$ and \[|M_{n,d+1,t}\setminus \shad_t(L)|=|M_{n,d+1,t}|=a^{[d]_t^{-1}}=a^{[d]_t}\] because $a_{d-k+1}-a_{d-k}<t+1$ for all $k\in\{1,\ldots, d-r+1\}$, $a_{d+1}-a_d\leq t$ and $a_{d+2}-a_{d+1}=t+1$.
\end{enumerate}
\end{proof}

\begin{Remark}
If $L=\emptyset\subset M_{n,d,t}$, then $a=|M_{n,d,t}\setminus L|=|M_{n,d,t}|={{n-(d-1)(t-1)}\choose d}$ and $\shad_t(L)=\emptyset$. Moreover, we have $a_{d-1}=d-2$, $a_d=a_{d+1}=n-(d-1)(t-1)$, $a_{d+2}=a_{d+1}+(t+1)$ and $a^{[d]_t}=a^{[d]_t^k}$, where $k$ is the largest integer of the set $\{-1,0,1\}$ with the property that $a_{d-k+1}-a_{d-k}\geq t+1$. Since $a_{d+1}-a_d=0$, $k\in\{-1,1\}$ and
\begin{enumerate}
  \item $a^{[d]_t}={{n-d(t-1)}\choose{d+1}}$ if $k=-1$;
  \item $a^{[d]_t}={{a_d-(t-1)}\choose {d+1}}+{{a_{d-1}-(2t-1)}\choose d}={{n-d(t-1)}\choose {d+1}}$ if $k=1$.
\end{enumerate}
We notice that we get the same number in both cases and thus, $a^{[d]_t}={{n-d(t-1)}\choose{d+1}}=|M_{n,d+1,t}|=|M_{n,d+1,t}\setminus\shad_t(L)|$.
\end{Remark}

We now give the main result of this paper.

\begin{Theorem}\label{Kruskalkatona}
Let $f=(f(0),f(1),\ldots, f(d),\ldots)$ be a sequence of positive integers and $t\geq 1$ be an integer. The following conditions are equivalent:
  \begin{enumerate}
    \item[\emph{(1)}] there exists an integer $n\geq 0$ and a $t$-spread strongly stable ideal $$I\subset \KK[x_1,\ldots,x_n]$$ such that $f(d)=f_{t,d-1}(I)$ for all $d$.
    \item[\emph{(2)}] $f(0)=1$ and $f(d+1)\leq f(d)^{[d]_t}$ for all $d\geq 1$.
  \end{enumerate}
\end{Theorem}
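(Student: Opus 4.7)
My approach is to mimic the proof of Theorem~\ref{clasicKK} as given in \cite{HHBook}, with Theorem~\ref{exists} and Proposition~\ref{cardshadow} as the main tools.

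For $(1)\Rightarrow(2)$: Given a $t$-spread strongly stable ideal $I$ realizing $f$, I use Theorem~\ref{exists} to replace $I$ with $I^{\tlex}$, which has the same $f_t$-vector, so I may assume $I$ is a $t$-spread lex ideal. Then $[I_d]_t$ is a $t$-spread lex set and $f(d)=|M_{n,d,t}\setminus [I_d]_t|$. Since $I$ is an ideal, $\shad_t([I_d]_t)\subset [I_{d+1}]_t$, whence
\[
f(d+1)=|M_{n,d+1,t}\setminus [I_{d+1}]_t|\leq |M_{n,d+1,t}\setminus\shad_t([I_d]_t)|=f(d)^{[d]_t}
\]
by Proposition~\ref{cardshadow}. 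The equality $f(0)=1$ is simply the convention $f_{t,-1}=1$.

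For $(2)\Rightarrow(1)$: I construct a $t$-spread lex ideal directly. Choose $n$ large enough that $f(d)\leq |M_{n,d,t}|$ for every $d$ with $f(d)>0$ and, moreover, large enough that on each input $a=f(d)$ the parameter $k$ in Definition~\ref{power} is non-negative, so that $a^{[d]_t}$ takes its $n$-independent value. For each $d\geq 1$, let $L_d\subset M_{n,d,t}$ be the unique $t$-spread lex set of cardinality $|M_{n,d,t}|-f(d)$. By Lemma~\ref{shad}, $\shad_t(L_d)$ is a $t$-spread lex set in $M_{n,d+1,t}$, and by Proposition~\ref{cardshadow} together with the hypothesis,
\[
|\shad_t(L_d)|=|M_{n,d+1,t}|-f(d)^{[d]_t}\leq |M_{n,d+1,t}|-f(d+1)=|L_{d+1}|.
\]
Since two $t$-spread lex sets in the same $M_{n,d+1,t}$ are nested according to their cardinalities, $\shad_t(L_d)\subset L_{d+1}$. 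I then let $I\subset\KK[x_1,\ldots,x_n]$ be the monomial ideal generated by $\bigcup_{d\geq 1}L_d$.

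The crux is to verify $[I_d]_t=L_d$ for every $d$; once this is done, $f_{t,d-1}(I)=f(d)$ follows and $I$ is $t$-spread strongly stable because each $L_d$ is (a lex set is strongly stable). The inclusion $L_d\subset [I_d]_t$ is clear. For the reverse, a $t$-spread monomial $u$ of degree $d$ in $I$ is divisible by some generator $v\in L_{d'}$ with $d'\leq d$; writing $u=v\cdot x_{j_1}\cdots x_{j_{d-d'}}$ and using that any monomial dividing a $t$-spread monomial is itself $t$-spread, I can adjoin the $x_{j_k}$'s into $v$ one at a time while remaining $t$-spread, placing $u$ in $\shad_t^{\,d-d'}(L_{d'})$. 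Iterating the nesting $\shad_t(L_{d''})\subset L_{d''+1}$ already established yields $\shad_t^{\,d-d'}(L_{d'})\subset L_d$, so $u\in L_d$. I expect the main technical subtleties to be precisely (a) this verification that no extraneous $t$-spread monomials enter $[I_d]_t$, and (b) the bookkeeping needed to guarantee that the chosen $n$ places all the $a^{[d]_t}$ computations into the $n$-independent regime, so that the hypothesis $f(d+1)\leq f(d)^{[d]_t}$ can be plugged directly into the shadow-cardinality estimate.
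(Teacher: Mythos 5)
Your $(1)\Rightarrow(2)$ direction coincides with the paper's proof, and your construction of the ideal in $(2)\Rightarrow(1)$ from the lex sets $L_d$ --- the nesting of lex sets by cardinality to get $\shad_t(L_d)\subset L_{d+1}$, and the verification $[I_d]_t=L_d$ using the fact that every divisor of a $t$-spread monomial is $t$-spread --- is a sound repackaging of what the paper does through the $I^{\tlex}$ machinery of Theorem~\ref{exists}.

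The gap is in your opening move for $(2)\Rightarrow(1)$: ``choose $n$ large enough that $f(d)\leq|M_{n,d,t}|$ for every $d$ with $f(d)>0$ and \ldots\ $a^{[d]_t}$ takes its $n$-independent value.'' This is not bookkeeping to be deferred; it is where the substance of this direction lives, and as stated it does not go through. First, such an $n$ exists only if $f(d)=0$ for all large $d$ (for any fixed $n$ one has $M_{n,d,t}=\emptyset$ once $1+(d-1)t>n$), and that eventual vanishing is itself a consequence of condition (2) that must be \emph{proved}; it is exactly the conclusion of the bound $f(d)\leq\binom{n-(d-1)(t-1)}{d}$ that the paper establishes by induction on $d$ after fixing $n=f(1)$. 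Second, the operator $a\mapsto a^{[d]_t}$ of Definition~\ref{power} genuinely depends on $n$ in the boundary cases $k\in\{-1,0\}$ (through $a_{d+1}=n-(d-1)(t-1)$, which governs whether $k=0$ is admissible, and through $a^{[d]_t^{-1}}=\binom{n-d(t-1)}{d+1}$), so the hypothesis $f(d+1)\leq f(d)^{[d]_t}$ and the identity of Proposition~\ref{cardshadow} must be read with respect to the \emph{same} $n$; if you enlarge $n$, you would have to show that the value of $f(d)^{[d]_t}$ cannot decrease, which you do not address. The paper avoids both problems by taking $n=f(1)$ once and for all and then proving, via a chain of binomial inequalities treating the cases $k\geq1$, $k=0$, $k=-1$ separately, that $f(d)^{[d]_t}\leq\binom{n-d(t-1)}{d+1}$, hence $f(d)\leq|M_{n,d,t}|$ for every $d$. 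That computation occupies roughly half of the paper's proof and is entirely absent from yours; supplying it (or a genuine substitute) is what your argument still needs.
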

\begin{proof}
According to Theorem~\ref{exists}, we may reduce to $t$-spread lex ideals instead of $t$-spread strongly stable ideals.

  For (1) $\Rightarrow$ (2), let $I\subset\mathbb{K}[x_1,\ldots,x_n]$ be a $t$-spread lex ideal with $f_{t,d-1}(I)=|M_{n,d,t}\setminus [I_d]_t|=f(d)\leq|M_{n,d,t}|={{n-(d-1)(t-1)}\choose d}$, where $[I_d]_t$ is the $t$-spread part of the $d$-th graded component of $I$. Since $[I_d]_t$ is a $t$-spread lex set in $M_{n,d,t}$ and $\shad_t([I_d]_t)\subset [I_{d+1}]_t$, it follows from Proposition~\ref{cardshadow} that \[f(d+1)=|M_{n,d+1,t}\setminus [I_{d+1}]_t|\leq |M_{n,d+1,t}\setminus\shad_t([I_d]_t)|=f(d)^{[d]_t}\] and of course we have $f(0)=f_{t,-1}(I)=1$.

  For (2) $\Rightarrow$ (1), let $n=f(1)$ and set $S=\mathbb{K}[x_1,\ldots, x_n]$. We first show by induction on $d$ that
  \[f(d)\leq {{n-(d-1)(t-1)}\choose{d}}.\] The assertion is trivial for $d=1$. Now assume that $f(d)\leq {{n-(d-1)(t-1)}\choose{d}}$ for some $d\geq 1$. Since $f(d+1)\leq f(d)^{[d]_t}$, it remains to prove that $f(d)^{[d]_t}\leq {{n-d(t-1)}\choose{d}}$.

  Let $f(d)={a_d \choose d}+{{a_{d-1}} \choose {d-1}}+\cdots + {{a_r} \choose r}$ be the binomial expansion of $f(d)$ with respect to $d$. Then $a_d\leq n-(d-1)(t-1)$.  By definition, $f(d)^{[d]_t}=f(d)^{[d]_t^k}$ where $k$ is the largest integer of the interval $[-1,d+r-1]$ with the property that $a_{d-k+1}-a_{d-k}\geq t+1$.

  If $k=-1$, then $f(d)^{[d]_t}={{n-d(t-1)}\choose{d+1}}$, as desired.

  If $k=0$, then $a_{d+1}-a_d\geq t+1$ and $a_d\leq n-d(t-1)-2$. Thus,
  \[f(d)^{[d]_t}=f(d)^{[d]_t^0}=\sum_{j=r}^{d}{{a_j}\choose{j}}+{{a_d-(2t-1)}\choose{d+1}}=\]
  \[=f(d)+{{a_d-(2t-1)}\choose{d+1}}<{{a_d+1}\choose{d}}+{{a_d+1-2t}\choose{d+1}}<\]\[<{{a_d+1}\choose{d}}+{{a_d+1}\choose{d+1}}={{a_d+2}\choose{d+1}}\leq
  {{n-d(t-1)}\choose{d+1}}.\]

  If $k\geq1$, then $a_{d-k+1}-a_{d-k}\geq t+1$ and $a_{d-k}+2\leq a_{d-k+1}-(t-1)$. Thus,
  \[f(d)^{[d]_t}=f(d)^{[d]_t^k}=\sum_{j=d+1-k}^{d}{{a_j-(t-1)}\choose{j+1}}+{{a_{d-k}-(2t-1)}\choose{d-k+1}}+\sum_{j=r}^{d-k}{{a_j}\choose{j}}=\]
  \[=\sum_{j=d+1-k}^{d}{{a_j-(t-1)}\choose{j+1}}+{{a_{d-k}-(2t-1)}\choose{d-k+1}}+a-\sum_{j=d-k+1}^{d}{{a_j}\choose{j}}<\]
  \[<\sum_{j=d+1-k}^{d}{{a_j-(t-1)}\choose{j+1}}+{{a_{d-k}+1}\choose{d-k+1}}+{{a_{d-k}+1}\choose{d-k}}=\]
  \[=\sum_{j=d+1-k}^{d}{{a_j-(t-1)}\choose{j+1}}+{{a_{d-k}+2}\choose{d-k+1}}\leq \] \[\leq\sum_{j=d+2-k}^{d}{{a_j-(t-1)}\choose{j+1}}+{{a_{d-k+1}-(t-1)}\choose{d-k+2}}+{{a_{d-k+1}-(t-1)}\choose{d-k+1}}=\]
  \[=\sum_{j=d+3-k}^{d}{{a_j-(t-1)}\choose{j+1}}+{{a_{d-k+2}-(t-1)}\choose{d-k+3}}+{{a_{d-k+1}-(t-1)+1}\choose{d-k+2}}\leq\ldots\leq\]
  \[\leq {{a_d-(t-1)}\choose{d+1}}+{{a_{d-1}-(t-1)+1}\choose{d}}\leq {{a_d-(t-1)+1}\choose{d+1}}.\]

  In the case that $f(d)<{{n-(d-1)(t-1)}\choose{d}}$, it follows by induction hypothesis that \[f(d)^{[d]_t}\leq {{a_d-(t-1)+1}\choose{d+1}}\leq {{n-(d-1)(t-1)-(t-1)}\choose{d+1}}={{n-d(t-1)}\choose{d+1}}.\]

  If $f(d)={{n-(d-1)(t-1)}\choose{d}}$, then \[f(d)^{[d]_t}={{a_d-(t-1)}\choose{d+1}}+{{a_{d-1}-(2t-1)}\choose{d}}=\]
  \[={{n-(t-1)(d-1)-(t-1)}\choose{d+1}}+{{d-2-(2t-1)}\choose{d}}={{n-d(t-1)}\choose{d+1}}.\]

  Hence we have $f(d)\leq {{n-(d-1)(t-1)}\choose{d}}=|M_{n,d,t}|$ for all $d$. In other words, $|M_{n,d,t}|-f(d)\geq 0$. Let $L_d$ be the unique $t$-spread lex set with $|L_d|=|M_{n,d,t}|-f(d)$.

  The existence of the $t$-spread lex ideal $I$ with $f_{t, d-1}(I)=f(d)=|M_{n,d,t}|-|L_d|$ is acquired in a similar way to the proof of the existence of $I^{\tlex}$ in Theorem~\ref{exists}. Thus, it remains to show that $\shad_t(L_d)\subset L_{d+1}$ for all $d$.

  Since $f(d+1)\leq f(d)^{[d]_t}$,
  \[|M_{n,d+1,t}\setminus L_{d+1}|=f(d+1)\leq f(d)^{[d]_t}=|M_{n,d,t}\setminus L_d|^{[d]_t}=|M_{n,d+1,t}\setminus \shad_t(L_d)|.\] It follows that $M_{n,d+1,t}\setminus L_{d+1}\subset M_{n,d+1,t}\setminus\shad_t(L_d)$ because $L_d$ is a $t$-spread lex set for every $d$. Thus, $\shad_t(L_d)\subset L_{d+1}$ for every $d$, as desired.
\end{proof}

\begin{Remark}\label{clasica}
For $t=1$ in Theorem~\ref{Kruskalkatona}, we obtain Theorem~\ref{clasicKK}. Indeed, let $a\text{ and }d\in\mathbb{Z}$. If $a={a_d \choose d}+{{a_{d-1}} \choose {d-1}}+\cdots + {a_r \choose r}$ is the binomial expansion of $a$ with respect to $d$, then $a_d>a_{d-1}>\ldots>a_r\geq r\geq1$. We set $a_{r-1}=r-2$. Since $a_{d-(d-r+1)+1}-a_{d-(d-r+1)}=a_r-a_{r-1}=a_r-r+2\geq 2=t+1$,
\[a^{[d]_1}=a^{[d]_1^{d-r+1}}=\sum_{j=r}^{d}{{a_j}\choose{j+1}}+{{a_{r-1}-1}\choose{r}}=\]\[=\sum_{j=r}^{d}{{a_j}\choose{j+1}}+{{r-2-1}\choose{r}}
=\sum_{j=r}^{d}{{a_j}\choose{j+1}}=a^{(d)}.\]
\end{Remark}

\begin{Example}
Let $f=(1,12,50,20,15,0,0,\ldots)$. If $t=1$, then $f$ satisfies the second condition of Theorem~\ref{Kruskalkatona} because we have
\begin{align*}
  f(1)^{[1]_1}=&  12^{(1)}={12\choose {1+1}}=66\geq f(2)=50,\\
  f(2)^{[2]_1}=&  50^{(2)}={10\choose {2+1}}+{5\choose{1+1}}=130 \geq f(3)=20,\\
  f(3)^{[3]_1}=&  20^{(3)}={6\choose {3+1}}=15\geq f(4)=15\text{ and }\\
  f(4)^{[4]_1}=& 15^{(4)}={6\choose {4+1}}=6\geq 0=f(5).
\end{align*}

We notice that $f(2)=50\leq f(1)^{[1]_t}={{f(1)-(t-1)}\choose{2}}={{13-t}\choose 2}$ if and only if $t=2$. Thus, $f$ does not satisfy the second condition of the previous theorem while $t>2$.

If $t=2$, then $f(3)=20\leq {{12-(3-1)}\choose{3}}=120$, $f(3)^{[3]_2}=f(3)^{[3]_2^1}={{6-(2-1)}\choose{3+1}}<{{6}\choose{4}}=15=f(4)$ and again we can not obtain a $2$-spread strongly stable ideal with the $f_2$-vector given by $f$.

In conclusion, $f=(1,12,50,20,15,0,0,\ldots)$ is the $f_t$-vector of a $t$-spread strongly stable ideal if and only if $t=1$.
\end{Example}

\end{document}